\DeclareMathAlphabet{\mathpzc}{OT1}{pzc}{m}{it}
\DeclareMathOperator{\res}{res}
\DeclareMathOperator{\cov}{covol}
\DeclareMathOperator{\rank}{rank}
\newcounter{eqcounter}[section]
\renewcommand{\theeqcounter}{\arabic{section}.\arabic{eqcounter}}
\providecommand{\binom}[2]{{#1\choose#2}}
\renewcommand{\geq}{\geqslant}
\renewcommand{\leq}{\leqslant}
\def\GL{\text{GL}}
\newcommand{\CC}{\mathbf{C}} 
\newcommand{\NN}{\mathbf{N}} 
\newcommand{\QQ}{\mathbf{Q}} 
\newcommand{\RR}{\mathbf{R}} 
\newcommand{\ZZ}{\mathbf{Z}} 
\newcommand{\FF}{\mathbf{F}} 
\newcommand{\CC}{\mathbb{C}} 
\newcommand{\NN}{\mathbb{N}} 
\newcommand{\QQ}{\mathbb{Q}} 
\newcommand{\RR}{\mathbb{R}} 
\newcommand{\ZZ}{\mathbb{Z}} 
\newcommand{\FF}{\mathbb{F}} 
\newcommand{\Sn}{{S_n}}
\newcommand{\KK}{K}
\newcommand{\OK}{{{\mathcal O}_K}}
\newtheorem{thm}{Theorem}[section]
\newtheorem{theorem}[thm]{Theorem}
\newtheorem{prop}[thm]{Proposition}
\newtheorem{cor}[thm]{Corollary}
\newtheorem{lem}[thm]{Lemma}
\newtheorem{example}[thm]{Example}
\theoremstyle{definition}
\newtheorem{definition}[thm]{Definition}
\newtheorem{defn}[thm]{Definition}
\newtheorem{remark}[thm]{Remark}
\newcommand{\hVmodG}{{\widehat{V}/\!\!/G}}
\newcommand{\hVmodSigma}{{\widehat{V}/\!\!/\Sigma}}
\newcommand{\disc}{{\mathbf{disc}}}
\newcommand{\id}{{\mathrm{id}}}
\newcommand{\calb}{\mathcal{B}}
\newcommand{\calh}{\mathcal{H}}
\newcommand{\calm}{\mathcal{M}}
\newcommand{\calp}{\mathcal{P}}
\newcommand{\calr}{\mathcal{R}}
\newcommand{\cals}{\mathcal{S}}
\newcommand{\calz}{\mathcal{Z}}
\newcommand{\FP}{\operatorname{\#FP}}
\newcommand{\set}[1]{\{#1\}}
\newcommand{\eval}[1]{\mathbin{\kern -1.1pt \mid_{#1}}}
\newcommand{\semidirect}{\mathbin{\times \kern -1.6pt \vrule height 6pt width 0.5 pt depth 0pt}\ }
\newcommand{\no}{\operatorname{\#cycles}}
\newcommand{\Qi}{{\KK}}
\newcommand{\david}[1]{\textcolor{blue}{David: #1}}
\newcommand{\btheta}{{\overline{\theta}}}
\newcommand{\bth}{{{\theta_*}}} 
\newcommand{\QL}{{\QQ L}}
\newcommand{\RL}{{\RR L}}
\begin{document}
\pagestyle{plain} 
\title{When are Permutation Invariants Cohen-Macaulay?}

\begin{abstract}
  Over a field of characteristic 0, every ring of invariants of a finite group
is Cohen-Macaulay.  This is not true for fields of positive characteristic.  
We consider permutation representations and their invariant rings over 
fields $\mathbb{F}_p$ of prime order.   We give an efficient algorithm which for any given permutation representation, determines those primes $p$ for which the invariant ring over $\mathbb{F}_p$ is Cohen-Macaulay, using linear algebra over $\ZZ$.  A generalization of the classical discriminant associated to the alternating group is defined for subgroups of certain finite unitary complex reflection groups.
\end{abstract}

\subjclass[2020]{Primary 13A50; Secondary 20F55, 14M05}

\keywords{}

\dedicatory{}

\author{H.E.A. Campbell $^{1}$ and David L. Wehlau $^{1,2}$}
\address{$^1$Department of Mathematics and Statistics, Queen's University, Kingston ON, Canada}
\address{$^2$Department of Mathematics and Computer Science, Royal Military College of Canada, Kingston ON, Canada}
\email{eddy@unb.ca, wehlau@rmc.ca}

\maketitle
\tableofcontents
 
\section{Introduction} \label{sec: intro}
  Let $R$ be a ring and suppose the finite group $G$ acts on the graded
  polynomial ring $R[x_1,x_2,\dots,x_n]$.  The set of polynomials fixed pointwise by each element of $G$ forms a subring known as the ring of 
  invariants and denoted $R[x_1,x_2,\dots,x_n]^G$.  Here we consider whether
  this ring of invariants is Cohen-Macaulay.  We are mainly concerned with the case where $R$ is a finite field of prime order and $G$ acts by permuting 
  the variables $x_i$.

  If $R=k$ is a field with algebraic closure $\widehat{k}$ then we may view   $k[x_1,x_2,\dots,x_n]$ as a subring of  $\widehat{k}[x_1,x_2,\dots,x_n]$ which is the
  coordinate ring of the $n$ dimensional affine space 
  $\widehat{V}=\mathbb{A}^n=\widehat{k}^n$. 
    Here $\{x_1,x_2,\dots,x_n\}$ is a basis for the dual space $\widehat{V}^*$ and thus
    the variables $x_i$ are linear functions on $\widehat{V}$. 
       Let $\{e_1,e_2,\dots,e_n\}$ be the basis of $\widehat{V}$ dual to  $\{x_1,x_2,\dots,x_n\}$
   and write $V$ to denote the $k$-vector space spanned by $\{e_1,e_2,\dots,e_n\}$.
  Following the usual convention we write 
  $\widehat{k}[\widehat{V}]=\widehat{k}[x_1,x_2,\dots,x_n]$.
  More generally, we will write $R[V]$ to denote $R[x_1,x_2,\dots,x_n]$.
  The action of $G$ on $k[V]$ yields a corresponding  $G$-action
  on $V$ given by $(g\cdot f)(v) = f(g^{-1} v)$ for $g \in G$, $f \in k[V]$ and $v \in V$.  With this identification, the invariant polynomials are precisely the elements of $k[x_1,x_2,\dots,x_n]$
 which are constant on the $G$-orbits in $\widehat{V}$.

 By Hochster and Eagon (\cite{H-E}), $k[V]^G$ is Cohen-Macaulay if $k$ is any  field of characteristic 0,
 or if the characteristic of $k$ does not divide $|G|$.
 However if $p$ does divide $|G|$,  $k[V]^G$ may fail to be Cohen-Macaulay.
 Smith \cite{S} showed in 1996 that $k[V]^G$ is always Cohen-Macaulay if $k$ is a field and the dimension of $V$ is 3 or less.
 
  The case where $G$ acts by permuting the variables $x_1,x_2,\dots,x_n$ is of particular interest.  In this case $G$ is a subgroup of the full group of 
  permutations on $n$ letters, $\Sn$.   For $G \leq \Sn$, 
Blum-Smith and Marques\cite{BenandSophie} showed that $\ZZ[V]^G$ is Cohen-Macaulay if and only if the action of
$G$ on $V$ is generated by reflections and bi-reflections.
Reflections are elements whose fixed point set is a codimension 1 subspace of $V$ and bi-reflections are elements whose fixed point set is a codimension 2 subspace of $V$. 
The reflections in $S_n$ are the transpositions and the bi-reflections in $\Sn$ 
are the products of two different transpositions, which includes all 3-cycles.
Note that the identity element is neither a reflection nor a bi-reflection .

The bulk of this paper assumes that $G \leq \Sigma \leq \Sn$ where $\Sigma$ is a Young subgroup of $\Sn$,
i.e., $\Sigma=S_{k_1} \times S_{k_2} \times \dots \times S_{k_r}$ with $k_1+k_2+\dots+k_r=n$.  Often we will use
$\Sigma=\Sn$.
 It is well-known that $\ZZ[V]^{\Sigma}$ and $\QQ[V]^{\Sigma} = \ZZ[V]^{\Sigma}\otimes_{\ZZ} \QQ$ are both
 polynomial rings.
In fact $R[x_1,x_2,\dots,x_n]^{\Sigma}$ is a polynomial 
ring for all commutative rings $R$.  

A set of $R$-algebraically independent homogeneous generators for $R[V]^{\Sigma}$ is known as a set of \emph{primary invariants} for $R^G$.
In fact, any homogeneous system of parameters for $R[V]^G$ forms a set of primary invariants, but for this article we will only use algebra generators for $R[V]^{\Sigma}$ as primary invariants.

We can give a definition of Cohen-Macaulayness tailored for this setting. The ring of $G$-invariants $R[V]^G$ forms a finitely generated module over $R[V]^{\Sigma}$ and we have $R[V]^G = \sum_{j=1}^m R[V]^{\Sigma}\theta_j$  for some homogeneous $\theta_j \in R[V]^G$.  
A minimal set of homogenous module generators $\theta_1,\theta_2,\dots,\theta_m$ is called a set of {\em secondary invariants} for $R[V]^G$.  
It is known that $m \geq \ell :=[\Sigma:G]$, (see Derksen and Kemper \cite[Thm 3.9.1, pg 111]{D-K}).
The ring $R[V]^G$ is said to be \emph{Cohen-Macaulay} if it is free as a $R[V]^{\Sigma}$-module.
It can be shown that $R[V]^G$ is Cohen-Macaulay if and only if 
some (every) set of secondary invariants has cardinality $\ell$.
Thus $R[V]^G$ is Cohen-Macaulay if and only if 
$R[V]^G = \oplus_{j=1}^\ell R[V]^{\Sigma}\theta_j$ for some (every)
set of secondary invariants $\theta_1,\theta_2,\dots,\theta_\ell$.  

The above definition of Cohen-Macaulayness is sufficient for our purposes.  For a definition valid in more general settings we refer the reader
to Section~3 of Stanley's paper \cite{bible} or Section~2.1 of the paper of Blum-Smith and Marques \cite{BenandSophie}. 

We denote by $I(R)$ the ideal of $R[V]^G$ generated by the set of homogeneous elements of $R[V]^\Sigma$ of positive degree, i.e.,
 $I(R)=R[V]_+^\Sigma R[V]^G$ where $R[V]^\Sigma_+ = \oplus_{d=1}^\infty R[V]^\Sigma_d$.
 and write $\epsilon: R[V]^G \longrightarrow R[V]^G/I(R)$ to denote the canonical surjection. 
 Since $I(R)$ is homogeneous this yields surjections in each degree
$\epsilon_d: R[V]_d^G \longrightarrow R[V]_d^G/I(R)_d = (R[V]^G/I(R))_d$.
If $R$ is a field and $R[V]^G$ is Cohen-Macaulay, then any set of $\ell$ homogeneous invariants $\theta_1,\theta_2,\dots,\theta_\ell$
is a set of secondaries if and only if
$\{\epsilon(\theta_1),\epsilon(\theta_2),\dots,
\epsilon(\theta_\ell)\}$ is an $R$-vector space basis
for $R[V]^G/I(R)$, \cite[prop. 3.1]{bible} or \cite[lemma 3.7.1]{D-K}

By Hochster and Eagon \cite[Proposition 19]{H-E},
$\QQ[V]^G$ is Cohen-Macaulay, i.e.,  it is free (of rank $\ell$) as a $\QQ[V]^{\Sigma}$-module.
If $p \in \ZZ$ is a prime then, reducing mod $p$, the representation of $G$ descends naturally to a representation of $G$
defined over $\FF_p=\ZZ/(p)$.  For primes $p$ not dividing $|G|$ the ring $\FF_p[V]^G=\FF_p[x_1,x_2,\dots,x_n]^G$ is always Cohen-Macaulay, again by Hochster and Eagon\cite[Proposition 19]{H-E}.
However for those primes $p$ which do divide $|G|$, the ring $\FF_p[V]^G$ may or may not be Cohen-Macaulay.

Our initial interest in this was inspired by \cite{BT}, in particular, the idea of evaluating a set of secondaries at an integer point $z$ with trivial stabilizer, which, in a certain sense, translates the discussion of the question of the Cohen-Macaulay property for one of these invariant rings into linear algebra over $\ZZ$ and lattice algorithms.  Notably, we are able to avoid the use of Gr\"obner or Khovanskii bases.

We have organized this article as follows.
  In Section~\ref{summary} we give a list summarizing our main results.
Section~\ref{preliminaries} reviews basic facts about the invariants of permutation representations.   
Following Blum-Smith and Marques~\cite{BenandSophie}, 
in Section~\ref{good and bad}, we
divide the set of primes into good and bad primes according to whether 
or not $\FF_p[V]^G$ is Cohen-Macaulay.  
We also introduce the main tool in our work, the square matrix $M(\bth)$
associated to a set of secondary invariants $\bth$.   
In Section~\ref{existence} we 
show that there exists a universal set of secondaries, i.e., a set of $\ell$ invariants in $\ZZ[V]^G$ which
reduce modulo $p$ to a set of secondary invariants for all good primes.

In Section~\ref{discriminant} we generalize the definition of the {\em discriminant} associated
to the alternating subgroups of 
the symmetric group to subgroups, $G$, of a finite complex unitary group, $\Sigma$, in the case that the ring of integers of the associated number field is a principal ideal domain.  The construction yields a semi-invariant $\Delta(G)$ built from the linear forms defining the reflecting hyperplanes of $\Sigma$ and an analysis of the action of $\Sigma$ on the cosets of $G$ in $\Sigma$.  We believe these results to be of independent interest.  We show that this semi-invariant is the primitive part of $\det M(\bth)$ in Section~\ref{section:divides}.  In Section~\ref{section:constructing}  we present an algorithm for constructing a universal set of secondaries and Section~\ref{conclusion} provides some concluding remarks.

\section{Summary of the Main Results}\label{summary}
 Suppose that $V$ is the canonical permutation representation of a group $G \leq \Sigma$ for a Young subgroup $\Sigma \leq \Sn$ and let
$\bth=(\theta_1,\theta_2,\dots,\theta_\ell)$ be a sequence of elements of $\ZZ[V]^G$ where $\ell$ denotes the index of $G$ in $\Sigma$.  
Let $\btheta_j$ denote the reduction modulo $p$ of the integer polynomial $\theta_j$.
We construct an $\ell\times\ell$ matrix $M(\bth)$ with entries in $\ZZ[V]^G$.  

Our main results may be summarized as follows.
\begin{enumerate}
   \item  
   $\FF_p[V]^G = \oplus_{j=1}^\ell \FF_p[V]^{\Sigma} \btheta_j$
   if and only if $p$ does not divide the content, $\mho(\bth)$, of 
   the polynomial $\det M(\bth)$.
 \item There exists a set of invariants $\bth \in \ZZ[V]^G$ such that
 $\FF_p[V]^G$ is Cohen-Macaulay if and only if $p$ does not divide 
 the content, $\mho(\bth)$, of the polynomial $\det M(\bth)$.
 Furthermore $\FF_p[V]^G = \oplus_{j=1}^\ell \FF_p[V]^{\Sigma} \btheta_j$
 for all primes $p$ not dividing $\mho(\bth)$.
  \item We give an algorithm for constructing such a set of universal secondary invariants.
 \item We give an algorithm for finding the content of 
  $\det M(\bth)$ working over the ring of integers.  By working with evaluation at an appropriately chosen point, we are able to avoid 
  constructing $M(\bth)$.
  \item For a subgroup, $G$, of a finite complex unitary reflection group, $\Sigma$,
  we define a natural semi-invariant, the $G$-discriminant $\Delta(G)$, depending on the generating reflections 
  of $G$ and the action of $\Sigma$ on $\Sigma/G$.
  \item  For a subgroup $G$ of such a reflection group $\Sigma$ and
  for $\theta_1,\theta_2,\dots,\theta_\ell$ a set of secondaries
  for the $G$-invariants as a module over the $\Sigma$-invariants,
  we show that $\det M(\bth)$ is a scalar multiple of $\Delta(G)$. 
\end{enumerate}

\section{ Preliminaries.} \label{preliminaries}

For an introduction to the invariant theory of finite groups and descriptions of the important results we refer the reader to the wonderful survey article by Richard Stanley \cite{bible}.  Both computational and theoretical aspects of invariant theory are surveyed in the book of Derksen and Kemper \cite{D-K}.  Facts about invariants of finite groups specifically in positive characteristic may be found in our book \cite{C-W}.

Since the group action preserves degree, rings of invariants are graded rings:
$R[V]^G = \sum_{d=0}^\infty R[V]^G_d$.  Over a field $k$ we have   
the Hilbert series of $k[V]^G$.  This is the power series in the indeterminate $\lambda$ 
$$\calh(k[V]^G,\lambda) =
\sum_{d=0}^\infty \dim_{k} (k[V]^G_d)\, \lambda^d\ .$$  This is always a 
rational function and for a subgroup $G \subseteq \Sigma$ it may be written in the  
form 
$$\calh(k[V]^G,\lambda)
=\frac{\sum_{j=1}^m \lambda^{a_j}}{\prod_{i=1}^n(1-\lambda^{d_i})}\ .$$  Here $m \geq \ell$ where $\ell=[\Sn:G]$, $a_j \in \NN$, and the polynomial ring $\QQ[V]^\Sigma$ is generated by homogeneous elements of degrees $d_1,d_2,\dots,d_n$.  Furthermore we have $\deg \theta_j = a_j$ for $j=1,2,\dots,m$ for any set of secondaries $\theta_1,\theta_2,\dots,\theta_m$. Molien's Theorem (see Stanley, \cite[Theorem 2.1]{bible}) provides a simple formula for the Hilbert series of a ring of invariants when $|G|$ is invertible in $k$. Thus Molien's Theorem may be used to find  the degrees of secondary invariants for $\QQ[V]^G$.

For permutation representations defined over $\QQ$, Kronecker\cite{K} proved 
the existence of a set of secondaries of degree at most $\binom{n}{2}$.
B.~Schmid[Proposition 9.2]\cite{Schmid} showed this upper 
bound is valid over any field of characteristic zero.  
Her statement of this result, and its proof, are slightly misstated but 
her method gives a non-constructive proof that secondaries for a subgroup of $\Sn$ in this setting
have degree at most $\binom{n}{2}$.

A few years later
M.~G\"obel proved a theorem \cite[Theorem 3.11]{Gobel} which is usually described as asserting that,
in all characteristics, the ring of invariants of a 
permutation representation
is always generated as an algebra by homogeneous invariants of degree at most $\max\{n,\binom{n}{2}\}$.  In fact, his proof provides an algorithm which 
constructs homogeneous secondary invariants of degree at most $\binom{n}{2}$ for $R[V]^G$ when 
$G \leq \Sn$.  This algorithm works whether or not the invariant ring is Cohen-Macaulay. 

For permutation representations there is a natural vector space basis for 
$(\QQ[V]^G)_d$ given by orbit sums.  Let $m=x_1^{i_1} x_2^{i_2} \cdots x_n^{i_n}$ be a monomial of degree $d$.  
The $G$-orbit of $m$ is the finite set of monomials 
$G\cdot m = \{g \cdot m \mid g \in G\}$.
The sum of these monomials is a monic invariant 
which we denote by $\calz(m)$ and call the orbit sum of $m$:
$$\calz(m) := \sum_{m' \in G\cdot m} m'\ .$$

The set of orbit sums of monomials of degree $d$ is clearly a basis for $(\QQ[V]^G)_d$.  This shows that 
$\dim_{\QQ} (\QQ[V]^G)_d$ is the number of orbits of the $G$-action on the set of monomials of degree $d$.

Furthermore if we reduce mod $p$ the reduced orbit sums $\overline{\calz(m)}$ form a basis for $(\FF_p[V]^G)_d$.  Thus
$\calh(\QQ[V]^G,\lambda) = \calh(\FF_p[V]^G,\lambda)$.
In particular the degrees of secondaries for 
$\QQ[V]^G$ and for $\FF_p[V]^G$ are equal if $\FF_p[V]^G$ is Cohen-Macaulay.  

\section{Good and Bad Primes}\label{good and bad}

We consider a group $G$ acting by permuting a set of $n$ indeterminants
$x_1,x_2,\dots,x_n$.  We divide the set of all integer primes $p$ 
into two disjoint classes by whether or not  
$\FF_p[x_1,x_2,\dots,x_n]^G$ is Cohen-Macaulay.

\begin{defn}
  Let $p \in \ZZ$ be a prime. 
  and let $\FF_p$ denote the field $\FF_p = \ZZ/(p)$ of order 
  and characteristic $p$.
  Consider a set of secondary invariants 
  $\bth = (\theta_1,\theta_2,\dots,\theta_\ell)$
  for $\QQ[V]^G$ with each $\theta_i \in \ZZ[V]^G$.  
  Thus $\QQ[V]^G = \oplus_{i=1}^\ell \QQ[V]^{\Sigma} \theta_i$.
  Clearing denominators and then reducing $\theta_i$ modulo $p$ yields $\btheta_i \in \FF_p[V]^G$.  
  Following Blum-Smith and Marques~\cite{BenandSophie},
  we say $p$ is \emph{good for $\bth$}  
  if $\FF_p[V]^G=\oplus_{i=1}^\ell \FF[V]^{\Sigma} \btheta_i$.  Otherwise $p$ is \emph{bad} for $\bth$. 
  Note that if $p$ is good for $\bth$ then $\FF_p[V]^G$ is Cohen-Macaulay but
  the converse is false in general: if $p$ is bad for $\bth$
  it may still be the case that $\FF_p[V]^G$ is Cohen-Macaulay (see
  Example~\ref{ex:alternating group}).
  
  More generally a prime $p$ is \emph{bad} if it is bad for all choices 
  of $\bth$, i.e., if $\FF_p[V]^G$ is not Cohen-Macaulay and $p$ is 
  \emph{good} if it is good for some choice of $\bth$, i.e., if
  $\FF_p[V]^G$ is Cohen-Macaulay.
\end{defn}

We will use $\gamma_1, \gamma_2,\dots,\gamma_\ell$ to denote a set of left 
coset representatives of $G$ in $\Sigma$.

Given a set of secondaries $\bth=(\theta_1,\theta_2,\dots,\theta_\ell)$ for 
$\QQ[V]^G$ with each $\theta_i \in \ZZ[V]^G$ we write $M(\bth)$ to 
denote the $\ell \times \ell$ matrix 
$M(\bth) = [\gamma_i \theta_j]_{\substack{1 \leq i \leq \ell\\1 \leq j \leq \ell}}$.
Fix a prime $p$.  We write $M(\overline\bth)$ to denote
the matrix obtained from $M(\bth)$ by reducing each entry modulo $p$, 
i.e., $M(\overline{\theta}_*) 
= [\gamma_i \btheta_j]_{\substack{1\leq i \leq \ell \\ 1 \leq j \leq \ell}}$.

For an integer polynomial $f\in \ZZ[V]$ the \emph{content} of $f$
is the $\gcd$ of the coefficients of $f$.  The \emph{primitive part} of such a polynomial is the quotient of the polynomial by its content. 
\begin{defn}\label{content}
We denote the content of the polynomial $\det M(\bth)$ by 
$\mho(\bth)$. 
We call the integer $\mho(\bth)$ the \emph{deficiency} of the 
sequence $\theta_1,\theta_2,\dots,\theta_\ell$.
\end{defn}

\subsection{Bad Primes}
Fix a set of secondaries $\bth=(\theta_1,\theta_2,\dots,\theta_\ell)$ for
 $\QQ[V]^G$ with each $\theta_i \in \ZZ[V]^G$.
 In this section we will show that a prime $p$ is bad for 
 $\bth$ if and only if $p$ divides $\mho(\bth)$.

Recall that $a_j = \deg(\theta_j)$.
For each degree $d$ we have the map 
$$\rho_d : \oplus_{j=1}^\ell \ZZ[V]^{\Sigma}_{d-a_j}  
  \to \ZZ[V]_d^G$$ given by 
  $\rho_d(f_1,f_2,\dots,f_\ell) = f_1\theta_1 + f_2\theta_2+ \dots +f_\ell\theta_\ell$.

Reducing $\rho_d$ modulo $p$ gives the map 
$$\overline{\rho}_d : \oplus_{j=1}^\ell \FF_p[V]^{\Sigma}_{d-a_j}  
  \to \FF_p[V]_d^G$$ given by 
  $\overline{\rho}_d(f_1,f_2,\dots,f_\ell) = f_1\btheta_1 + f_2\btheta_2+ \dots +f_\ell\btheta_\ell$.

 Now we claim $p$ is good for $\bth$ if and only if the $\ell$ elements of $\overline{\theta}_*$  form a module basis for 
  $\FF_p[V]^G$ over $\FF_p[V]^{\Sigma}$ if and only if  $\overline{\rho}_d$ is surjective for all $d$.  We see
  this as follows.
   Since $\QQ[V]^G$ is Cohen-Macaulay, the linear transformation
 $$\rho_d \otimes \QQ : \oplus_{j=1}^\ell \QQ[V]^{\Sigma}_{d-a_j}  
  \to \QQ[V]_d^G$$ is an isomorphism for each $d$. 
From the equality of the Hilbert series we know that $\dim_{\FF_p} \left(\oplus_{j=1}^\ell \FF_p[V]^{\Sigma}_{d-a_j}\right) = \dim_{\FF_p} \FF_p[V]^{G}_{d}$.  Hence 
  $\overline{\rho}_d$ is surjective if and only if it is
  injective.

\begin{lem}\label{lem: tuple}
  The multi-homogeneous $\ell$-tuple $(f_1,f_2,\dots,f_\ell)\in (\FF_p[V]^{\Sigma})^\ell$ lies in the kernel of $\overline{\rho_d}$ if and only it lies in the kernel of the matrix $M(\overline{\theta}_*)$.  
 \end{lem}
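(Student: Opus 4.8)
The plan is to compute directly the action of the matrix $M(\overline{\bth})$ on the column vector $\mathbf{f} = (f_1,f_2,\dots,f_\ell)$ and to recognize the resulting entries as the images of the single element $\overline{\rho_d}(f_1,\dots,f_\ell)$ under the coset representatives $\gamma_1,\dots,\gamma_\ell$. The one essential ingredient is that each $f_j$ lies in $\FF_p[V]^{\Sn}$ and is therefore fixed by every $\gamma_i \in \Sn$.

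First I would write out the $i$-th coordinate of $M(\overline{\bth})\mathbf{f}$, namely $\sum_{j=1}^\ell (\gamma_i \btheta_j)\, f_j$. Using $\gamma_i f_j = f_j$ together with the fact that $\gamma_i$ acts as a ring automorphism of $\FF_p[V]$, each summand may be rewritten as $(\gamma_i \btheta_j)(\gamma_i f_j) = \gamma_i(f_j\,\btheta_j)$, and then $\gamma_i$ can be pulled out of the sum to give
$$\sum_{j=1}^\ell (\gamma_i \btheta_j)\, f_j = \gamma_i\Bigl(\sum_{j=1}^\ell f_j\,\btheta_j\Bigr) = \gamma_i\bigl(\overline{\rho_d}(f_1,\dots,f_\ell)\bigr).$$
Thus the $i$-th entry of $M(\overline{\bth})\mathbf{f}$ is exactly $\gamma_i$ applied to the single element $h := \overline{\rho_d}(f_1,\dots,f_\ell)\in \FF_p[V]^G_d$.

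From this identity both implications follow at once. If $\mathbf{f}$ lies in the kernel of $\overline{\rho_d}$, then $h=0$, so every entry $\gamma_i(h)$ of $M(\overline{\bth})\mathbf{f}$ vanishes and $\mathbf{f}$ lies in the kernel of the matrix. Conversely, if $\mathbf{f}$ lies in the kernel of the matrix, then $\gamma_i(h)=0$ for each $i$; since each $\gamma_i$ acts as an invertible linear operator on $\FF_p[V]$ (it merely permutes the variables), $\gamma_i(h)=0$ forces $h=0$, so $\mathbf{f}$ lies in the kernel of $\overline{\rho_d}$. In fact any single coordinate already suffices, precisely because each $\gamma_i$ is invertible.

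There is no genuine obstacle here; the only points requiring care are bookkeeping. One must note that it is the $\Sn$-invariance of the $f_j$ that legitimizes extracting $\gamma_i$ from the sum, and that the multi-homogeneity hypothesis guarantees that all the products $f_j\,\btheta_j$ and their images land in the single graded piece $\FF_p[V]^G_d$, so that the two kernels are being compared inside one and the same finite-dimensional space.
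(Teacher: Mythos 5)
Your proposal is correct and follows essentially the same route as the paper: the key step in both is to use the $\Sn$-invariance of the $f_j$ and the fact that $\gamma_i$ acts as a ring automorphism to identify the $i$-th entry of $M(\overline{\bth})\mathbf{f}$ with $\gamma_i\bigl(\sum_j f_j\btheta_j\bigr)$. The only cosmetic difference is that the paper handles the easy direction by reading off the row with $\gamma_1=e$, whereas you invoke invertibility of an arbitrary $\gamma_i$; these are the same observation.
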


\begin{proof}
  The sufficiency is obvious.\\ 
  Suppose then that $f_1\btheta_1 + f_2\btheta_2+ \dots +f_\ell\btheta_\ell=0$.
  Therefore 
$$0= \gamma_i \cdot \sum_{j=1}^\ell f_j \btheta_j 
    = \sum_{j=1}^\ell (\gamma_i\cdot f_j) (\gamma_i\cdot \btheta_j)
    = \sum_{j=1}^\ell (f_j) (\gamma_i\cdot \btheta_j)$$
    for all $i$.  Thus $(f_1,f_2,\dots,f_\ell)$ lies in the kernel of $M(\overline{\theta}_*)$.
\end{proof}

As a consequence of Lemma~\ref{lem: tuple} we have the following result.
\begin{thm}
The prime $p\text{ divides }\mho(\bth)$ if and only if $p$ is bad for $\bth$. 
\end{thm}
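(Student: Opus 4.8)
The plan is to reformulate the claim as the single equivalence that $p$ is bad for $\bth$ if and only if $\det M(\overline{\bth})=0$ in $\FF_p[V]$, and then to note that this vanishing is exactly the divisibility condition $p\mid\mho(\bth)$. The second link is routine: reduction modulo $p$ is a ring homomorphism $\ZZ[V]\to\FF_p[V]$, so it commutes with taking determinants and $\det M(\overline{\bth})$ is the reduction of $\det M(\bth)\in\ZZ[V]$. Hence $\det M(\overline{\bth})=0$ iff $p$ divides every coefficient of $\det M(\bth)$, i.e.\ iff $p$ divides the content $\mho(\bth)$. One first checks that $\det M(\bth)\neq 0$ in $\QQ[V]$, so that $\mho(\bth)$ is a genuine positive integer: the $\gamma_i$ induce the $\ell$ distinct $\QQ(V)^{\Sn}$-embeddings of $\QQ(V)^G$ into $\QQ(V)$, and the $\theta_j$ form a $\QQ(V)^{\Sn}$-basis of $\QQ(V)^G$ because $\QQ[V]^G$ is free over $\QQ[V]^{\Sn}$ on them; independence of characters then forces $[\gamma_i\theta_j]$ to be invertible over $\QQ(V)$.

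It remains to prove $p$ bad $\iff \det M(\overline{\bth})=0$. By the reductions established just before the Lemma, $p$ is bad precisely when $\overline{\rho}_d$ fails to be injective for some $d$, and by the Lemma this occurs precisely when $M(\overline{\bth})$ admits a nonzero multi-homogeneous kernel tuple $(f_1,\dots,f_\ell)$ with $f_j\in\FF_p[V]^{\Sn}_{d-a_j}$. One direction is immediate: such a tuple is a nonzero vector in the kernel of the square matrix $M(\overline{\bth})$ over the field $\FF_p(V)$, so $\det M(\overline{\bth})=0$.

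For the converse, which I expect to be the main obstacle, I would argue as follows. Suppose $\det M(\overline{\bth})=0$; then $M(\overline{\bth})$ is singular over $\FF_p(V)$ and has a nonzero kernel vector, but its entries need not be $\Sn$-invariant, so this does not directly produce a relation among the $\btheta_j$ over $\FF_p[V]^{\Sn}$. The key structural observation is that for every $\sigma\in\Sn$ one has $\sigma M(\overline{\bth})=P_\sigma M(\overline{\bth})$, where $P_\sigma$ is the permutation matrix of the action of $\sigma$ on the left cosets $\Sn/G$; indeed $\sigma\gamma_i G=\gamma_{\sigma(i)}G$ and $\btheta_j$ is $G$-invariant, so $\sigma(\gamma_i\btheta_j)=\gamma_{\sigma(i)}\btheta_j$. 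Consequently the kernel $W\subseteq\FF_p(V)^\ell$ of $M(\overline{\bth})$ is stable under the semilinear action of $\Sn=\mathrm{Gal}(\FF_p(V)/\FF_p(V)^{\Sn})$, and by Galois descent $W$ is spanned over $\FF_p(V)$ by its $\Sn$-invariant vectors, so $W\neq 0$ yields a nonzero kernel vector with entries in $\FF_p(V)^{\Sn}=\mathrm{Frac}(\FF_p[V]^{\Sn})$. Clearing a common denominator from $\FF_p[V]^{\Sn}$ gives a nonzero kernel vector $(w_1,\dots,w_\ell)\in(\FF_p[V]^{\Sn})^\ell$; since the $j$-th column of $M(\overline{\bth})$ is homogeneous of degree $a_j$, the kernel is graded, and extracting a single nonzero homogeneous component produces a nonzero multi-homogeneous tuple in $\ker M(\overline{\bth})$. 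By the Lemma this lies in some $\ker\overline{\rho}_d$, so $\overline{\rho}_d$ is not injective and $p$ is bad. The only points needing care are that $\FF_p(V)/\FF_p(V)^{\Sn}$ is genuinely Galois with group $\Sn$ (the permutation action is faithful) and that $\mathrm{Frac}(\FF_p[V]^{\Sn})=\FF_p(V)^{\Sn}$, both of which are standard. Combining this equivalence with the content observation of the first paragraph gives the theorem.
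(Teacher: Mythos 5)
Your proposal is correct, and its overall route is the same as the paper's: the chain $p \mid \mho(\bth) \Leftrightarrow \det M(\overline{\bth})=0 \Leftrightarrow \overline{\rho}_d$ non-injective for some $d$ $\Leftrightarrow$ non-surjective for some $d$ $\Leftrightarrow$ $p$ bad, with the Lemma and the equality of Hilbert series doing identical work in both arguments. Where you genuinely add something is in the two links the paper asserts rather than proves. First, the paper justifies $\det M(\bth)\neq 0$ by claiming that, ``since $\gamma_1=e$,'' the columns $(\gamma_1\theta_j,\dots,\gamma_\ell\theta_j)$ are linearly independent in $\QQ[V]^\ell$; as written this cannot suffice, because nonvanishing of the determinant means independence over the fraction field $\QQ(V)$, and looking at the first coordinate alone only rules out relations $\sum_j c_j\theta_j=0$ with $c_j\in\QQ[V]^{\Sn}$ --- relations with $c_j\in\QQ[V]$ always exist once $\ell\geq 2$ (e.g.\ $c_1=\theta_2$, $c_2=-\theta_1$). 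What is really needed is simultaneous independence of all $\ell$ rows, and that is exactly what your Dedekind independence-of-embeddings argument delivers. Second, the implication $\det M(\overline{\bth})=0\Rightarrow\overline{\rho}_d$ non-injective for some $d$ is precisely the direction the Lemma does not give: singularity over $\FF_p(V)$ only yields a kernel vector with entries in $\FF_p(V)$, and the paper is silent on how to convert it into an invariant multi-homogeneous one. Your argument --- the row-permutation equivariance $\sigma M(\overline{\bth})=P_\sigma M(\overline{\bth})$, stability of the kernel under the semilinear $\Sn$-action, Galois descent to a nonzero kernel vector over $\FF_p(V)^{\Sn}$, clearing denominators, and extracting a nonzero homogeneous component --- is the missing justification, and each step checks out. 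In short, you follow the paper's skeleton but close two real gaps; the paper's proof is shorter only because it leaves those steps implicit.
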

\begin{proof}
Clearly if $\overline{\rho_d}$ is not injective for some 
$d$ then $\det M(\overline{\theta}_*) = 0$.  Conversely suppose that
$\det M(\overline{\theta}_*) = 0$.  Since $M(\overline{\theta}_*)$ is square
matrix with entries from $\FF_p[V]$, it follows that 
$M(\overline{\theta}_*)$ has a non-zero kernel over the field
$\FF_p(V)$.  

This kernel is stable under the action of $\Sigma$.  To see this suppose 
$(f_1,f_2,\dots,f_\ell) \in \FF_p(V)^\ell$ lies in the kernel of
$M(\overline{\theta}_*)$,
i.e, $\sum_{i=1}^\ell \gamma_j \theta_i f_i = 0$ for all $j$.  Consider
$(\sigma f_1,\sigma f_2,\dots,\sigma f_\ell)$.  For each $j$ there exists $k$
and $g_j \in G$ such that 
$\sigma^{-1}\gamma_j = \gamma_k g_j$.
Thus 
$\sum_{i=1}^\ell (\gamma_j \theta_i) (\sigma f_i)= 
\sigma \sum_{i=1}^\ell (\sigma^{-1} \gamma_j\theta_i) f_i
=\sigma \sum_{i=1}^\ell \gamma_k g_j \theta_i f_i
=\sigma \sum_{i=1}^\ell \gamma_k \theta_i f_i
= \sigma \cdot 0 = 0$.
Hence the kernel of $\det M(\overline{\theta}_*)$ in $\FF(V)^\ell$ is $\Sigma$-stable.

By Milne\footnote{We thank the anonymous referee for suggesting this reference.} \cite[Lemma 3.4]{M} this implies that $M(\overline{\theta}_*)$ has a non-zero kernel over the field
$\FF_p(V)^{\Sigma}$.  
Thus it has a non-zero kernel over $\FF_p[V]^\Sigma$
and therefore $\overline{\rho_d}$ is not injective for some 
$d$.

Hence
  \begin{align*}
    p\text{ divides }\mho(\bth) &\iff \det M(\overline{\theta}_*) = 0\\
    & \iff \overline{\rho_d}\text{ is not injective for some }d \\
    &\iff \overline{\rho_d}\text{ is not surjective for some }d\\
    &\iff p\text{ is a bad prime for }\bth \ .
  \end{align*}
\end{proof}

 \subsection{Good Primes}\label{good primes}\ \\
 In this section we prove, from another point of view, that if $p$ is good for $\bth$, that is, if $\overline{\theta}_*$ is a set of secondaries for $\FF_p[V]^G$, then $p$ does not divide $\mho(\bth)$. 
    Suppose we have chosen secondary invariants 
    $\bth \in \ZZ[V]$
    for $\QQ[V]^G$ which reduce to secondary invariants for $\FF_p[V]^G$.

 Let $\widehat{\FF}$ denote the algebraic closure of $\FF_p$ and write $\widehat{V} := V \otimes \widehat{\FF}$.
 Dual to the inclusions of rings $$\widehat{\FF}[\widehat{V}]^{\Sigma} \hookrightarrow \widehat{\FF}[\widehat{V}]^G \hookrightarrow \widehat{\FF}[\widehat{V}]$$
  are the morphisms of affine varieties
$$\widehat{V} \stackrel{\pi_G}{\longrightarrow}\hVmodG \stackrel{\phi}{\longrightarrow}  \hVmodSigma\ .$$

Then $\pi_{\Sigma} = \phi \circ \pi_G$ is the map giving the 
quotient by $\Sigma$.
  
  Consider $z \in \widehat{V}$ such that the $\Sigma$-orbit $\{g\cdot z \mid g \in\Sigma\}$ has cardinality $|\Sigma|$.
  Such a point $z$ is a point which is not fixed by any non-trivial element 
  of $\Sigma$, i.e., it is any point not lying on any of the reflection hyperplanes
  associated to $\Sigma$.  
   Then $\phi^{-1}(\pi_{\Sigma}(z))$ is a set of $\ell$ points in $\hVmodG$.    Concretely, the 
   $\Sigma$-orbit of $z$ breaks up into $\ell$ different $G$-orbits, and the $\ell$ points of 
   $\phi^{-1}(\pi_{\Sigma}(z))$ are the points in $\hVmodG$ corresponding to these orbits.   
   Write 
   $X := \{w_1,w_2,\dots, w_{\ell}\}$ where $w_i=\gamma_i^{-1} z$. The $\ell$ points of $X$ 
   are in one-to-one correspondence with these $\ell$ orbits. 
   Then $M(\overline{\theta}_*)|_z = [(\gamma_i \overline{\theta}_j)(z)]_{1 \leq i,j \leq \ell} = [\overline{\theta}_j(w_i)]_{1 \leq i,j \leq \ell}$. 
   
  For each $i=1,2,\dots,\ell$ consider the vector $\overline{\theta}_i|_X=
(\overline{\theta}_i(w_1),\overline{\theta}_i(w_2),\dots,\overline{\theta}_i(w_\ell))$.
     Showing that
   $\det (\overline{M}(\bth)|_z) \ne 0$ is equivalent to showing that these $\ell$ vectors are linearly independent over 
   $\widehat{\FF}$.

  The set $X$ is a closed subset of $\hVmodG$.  This implies  the restriction map from 
  $\hVmodG$ to $X$
  corresponds to a surjection onto the ring of functions on $X$.  But since $X$ is a set of $\ell$ points, this ring of functions is the ring 
  $\widehat{\FF}^\ell$ (with componentwise multiplication).   We denote this restriction map by
    $$
        \res_X:\widehat{\FF}[\widehat{V}]^G \twoheadrightarrow \widehat{\FF}^\ell\, .
    $$
    Since $p$ is a good prime we have
  $\FF_p[V]^G = \oplus_{i=1}^\ell \FF_p[V]^{\Sigma}\btheta_i$ and thus 
   $\widehat{\FF}[\widehat{V}]^G = \oplus_{i=1}^\ell \widehat{\FF}[\widehat{V}]^{\Sigma}\btheta_i$.  Now for $f \in \widehat{\FF}[\widehat{V}]^{\Sigma}$, the restriction of $f$ to $X$ is just 
   $(f(z),f(z),\dots,f(z))$.  Hence the image of the restriction of $\widehat{\FF}[\widehat{V}]^{\Sigma}$ to $X$ is the same as the constant functions 
   on $X$.  This shows that the image of restricting $\widehat{\FF}[\widehat{V}]^G$ is spanned by the $\ell$ functions
   ${\btheta_1}\rvert{_X},{\btheta_2}\rvert_X,\dots,{\btheta_\ell}\rvert_X$.  Since we know the ring of functions on $X$ has 
   vector space dimension $\ell$ it follows that these $\ell$ restrictions must be linearly independent over $\widehat{\FF}$.
   Since these vectors are the columns of $M(\overline{\theta}_*)\rvert_z$, it follows that $\det M(\overline{\theta}_*)\rvert_z \ne 0$.  This shows that
   if $p$ is a good prime for $\bth$  
   then $p$ does not divide $\mho(\bth)$.

 \begin{remark}\label{rem: lin ind over Q}
   Since the set of vectors 
   $\{\overline{\theta}_1|_X,\overline{\theta}_2|_X,\dots,\overline{\theta}_\ell|_X\}$
   is linearly independent over $\widehat{\FF}$ for every good prime $p$, it follows that the set of integer vectors 
  $\{\theta_1|_X,\theta_2|_X,\dots,\theta_\ell|_X\}$ is linearly independent over $\QQ$.
 \end{remark}

\begin{example}\label{ex:alternating group}
  Consider the alternating group
$G=A_n \le \Sigma=\Sn$.  Since $A_n$ is generated by 3-cycles which are bi-reflections, the theorem of Blum-Smith and Marques implies that $\FF_p[V]^{A_n}$ is Cohen-Macaulay for all primes $p$.  This result was first proved by Vic Reiner \cite{ReinerThesis}.  Here we show how it follows from our methods.

There are two natural choices for secondary invariants: 
$\{1,\disc\}$ where 
    $$
        \disc = \prod_{1\leq i<j\leq n}(x_i-x_j)$$ 
and $\{1,\calz(m)\}$ where $m=\prod_{i=1}^{n-1} x_i^{n-i}$.  

Take $\sigma \in \Sn \setminus A_n$.
Then $\sigma\cdot \disc = -\disc$ and $\sigma\cdot \calz(m) = \calz({m'})$ where 
$m'= x_n\prod_{i=1}^{n-2} x_i^{n-i}$.

Thus $M(1,\disc)=
\begin{pmatrix}
    1 & \disc\\
    1 & -\disc
\end{pmatrix}$
and $\det M(\bth)=-2\,\disc$ and 
$\mho(1,\disc)=2$.
Thus 2 is bad for $(1,\disc)$ and all other primes are good.

Conversely $M(1,\calz(m))=
\begin{pmatrix}
    1 & \calz(m)\\
    1 & \calz({m'})
\end{pmatrix}$
and $\det M(\bth)=\calz({m'})-\calz({m}) =-\disc$ and 
$\mho(1,\calz(m))=1$.
Hence all primes are good for $(1,\calz(m))$.
\end{example}

\section{Existence of Universal Secondaries}\label{existence}

In the previous example, we saw that there was a choice of secondary invariants
which was good for all primes.
Here, we want to show that, in general, there is a set of secondary invariants
which is good for all good primes $p$, i.e., a set
$\theta_1,\theta_2,\dots,\theta_\ell \in \ZZ[V]^G$ for which
$\FF_p[V]^G = \oplus_{i=1}^\ell \FF_p[V]^{\Sn} \btheta_i$ for all good primes.
We call such a set of secondaries a set of \emph{universal secondaries}.
\begin{theorem}   
A set of universal secondaries $\bth$ always exists. 
\end{theorem}

\begin{proof}
Let $S$ be the (finite) set of bad primes and write $\ZZ_S$ to denote the integers localized at the multiplicative set generated by $S$.

Recall that $\calh(k[V]^G,\lambda)
=\frac{\sum_{j=1}^\ell \lambda^{a_j}}{\prod_{i=1}^n(1-\lambda^{d_i})}$ for $k=\FF_p$ and for $k=\QQ$.
Since the Hilbert series is the same for all these $k$  we see that the number of secondaries of degree $d$, is the same for all $k$ for which $k[V]^G$ is Cohen-Macaulay.  We denote this number by $\tau(d):=\#\{j\mid a_j = d\}$.

Consider the exact sequence
$$
\bigoplus_{i=1}^d(\ZZ_S[V]^{\Sigma}_{i} \otimes \ZZ_S[V]^G_{d-i}) \xrightarrow{~\nu_d~} \ZZ_S[V]^G_d 
\xrightarrow{~\mu_d~} Q_d \longrightarrow 0
$$
where $Q_d$ is the co-kernel of $\nu_d$. 

For every good prime $p$, tensoring with $\FF_p = \ZZ/(p)$ yields the exact sequence
$$
\bigoplus_{i=1}^d(\FF_p[V]^{\Sigma}_{i} \otimes \FF_p[V]^G_{d-i}) \longrightarrow \FF_p[V]^G_d \longrightarrow Q_d\otimes \FF_p \longrightarrow 0 \ .$$
The cokernel $Q_d\otimes \FF_p$ is an $\FF_p$ vector space of dimension $\tau(d)$ for all
good primes $p$. 
Since $\ZZ_S[V]^G_d$ is a finitely generated $\ZZ_S$-module,
$Q_d$ is also a finitely generated $\ZZ_S$-module.  Hence the classification of finitely generated modules over a PID applies.  If there were a prime $p$ of $\ZZ_S$ for which $Q_d$ had $p$-torsion 
then the rank of $Q_d \otimes \FF_p$ would exceed $\tau(d)$. 
Since no such prime $p$ exists, $Q_d$ is a torsion free $\ZZ_S$-module.
Thus $Q_d$ is a free $\ZZ_S$-module since it is a finitely generated torsion free module over 
the PID $\ZZ_S$.

Since $Q_{d}$ is a free $\ZZ_S$-module, the map $\mu_d$ splits.  
Choosing a $\ZZ_S$-basis $f_1,\dots,f_{\tau(d)}$ for $Q_d$ and a section 
$\psi_d:Q_d \to \ZZ_S[V]^G_d$ of $\mu_d$ we may use $\psi_d(f_i) \in \ZZ_S[V]^G$ for $i=1,2,\dots,\tau(d)$ as the secondaries of degree $d$.  Clearing denominators, we may assume that each of these secondaries lies in $\ZZ[V]^G$.  Using such $\psi_d(f_i)$ as the secondaries $\theta_j$ we get 
$\ZZ_S[V]^G = \oplus_{j=1}^\ell \ZZ_S[V]^{\Sigma} \theta_j$
 and for all good primes $p$ we have  $\FF_p[V]^G = \oplus_{j=1}^\ell \FF_p[V]^{\Sigma} \btheta_j$.
\end{proof}

Thus we have proved 
\begin{cor}
  For every set of universal secondaries, $\bth$, the ring
    $\FF_p[V]^G$ is Cohen-Macaulay if and only if $p$ does not divide $\mho(\bth)$.
\end{cor}
\begin{definition}
    In Section~\ref{section:constructing} we give an algorithm which constructs a set of secondary invariants, $\bth$, such that $\mho(\bth)$ divides $\mho(\eta_*)$ for every set of secondaries $\eta_*$.  We define $\mho(G)$ to be this unique (up to sign) division minimum integer
    dividing $\mho(\eta_*)$ for all sequences of secondaries $\eta_* \subset \ZZ[V]^G$.
\end{definition}
Ben Blum-Smith has observed that the existence of a set of universal secondary invariants 
  can also be derived as a consequence of Theorem~7 from the recent paper \cite{AA} of  Areej Almuhaimeed.

  In Sections~\ref{section:divides} and \ref{section:constructing} below, we will give an algorithm for finding a set of universal
 secondaries and an efficient algorithm for determining $\mho(G)$ from a universal set.

 \begin{example}\label{ex: group of order 20}
The symmetric group $S_5$ contains a unique (up to conjugacy) subgroup $G$ of order 20.   This group is generated by
$\{(1, 5, 4, 2, 3),~(2, 3, 4, 5)\}$ and does not have a set of generators consisting of reflections and bi-reflections.  Thus the theorem of Blum-Smith and Marques implies that there is at least one bad prime.  Since this prime divides $|G|$ we see that 2 and/or 5 are bad primes for this group.  Using the algorithm of Section~\ref{section:constructing} with $\ZZ[V]=\ZZ[x_1,x_2,x_3,x_4,x_5]$ we get the following set of universal secondaries
$\theta_1=1$,
$\theta_2=\calz(x_1^2 x_2 x_3)$, 
$\theta_3=\calz(x_1^2 x_2^2 x_3)$, 
$\theta_4=\calz(x_1^3 x_2^2 x_3)$, 
$\theta_5=\calz(x_1^3 x_2^2 x_3 x_5)$, and
$\theta_6=\calz(x_1^4 x_2^3 x_4)$. 
This yields $\mho(\bth)=2$ (and thus $\mho(G)=2$). 
Thus $\FF_p[V]^G$ is Cohen-Macaulay if and 
only if $p\neq 2$.

Since $\ZZ[V]^G$ is not Cohen-Macaulay, we know there is a $G$-invariant of degree less than or equal $\binom{5}{2}=10$ that is not in $\calm:=\oplus_{j=1}^\ell \ZZ[V]^{\Sn}\theta_j$.  Magma computation shows us that $f = \calz(x_1^4 x_2^3 x_3^2x_4)$ is not in $\calm$ but we have $2f \in \calm$.  
\end{example}

\section{The action of a reflection on the cosets of $G$}\label{reflections}

Since the results proved here may be independent interest, for the next three sections 
we work in the more general setting where the group $G$ is a subgroup of an arbitrary finite unitary complex reflection group $\Sigma$.
Thus $\Sigma$ is a finite subgroup generated by reflections.  As usual, a reflection is a non-identity group element fixing a hyperplane pointwise.  The group $\Sigma$ is defined over the ring of integers $\OK$ of some number field $\KK$, i.e., 
$\Sigma \leq \GL(n,\OK)$.  
The ring $K[V]^\Sigma$ is a polynomial ring and we
use a set of algebra generators for $K[V]^\Sigma$ 
which lie in $\OK[V]$ as primary invariants.  For 
the secondary invariants $\theta_*$ 
we use elements of $\OK[V]$ which form a set of minimal module generators
for $K[V]^G$ as a module over $K[V]^\Sigma$.

In this and the next two sections, we show that 
$\det{M}(\theta_*)$ has a geometric interpretation in terms of the 
reflecting hyperplanes of $\Sigma$ and the action of $\Sigma$ on the cosets 
of $G$ in $\Sigma$.  In particular, in this section we derive a formula for 
the number of reflections contained in $G$.  

Let $\calp$ denote the set of all reflecting hyperplanes for $\Sigma$.  
Associated to each hyperplane $P \in \calp$ is a linear form $L_P\in\KK[V]$ which cuts out $P$.  Clearing denominators we may assume that $L_P 
\in \OK[V]$.  If $\OK$ is a PID, we will choose the linear form
 $L_P$ to be primitive.
Let $C_P$ denote the cyclic subgroup consisting of those elements of $\Sigma$ which fix $P$ pointwise.  For each hyperplane $P$ fix an element $g_P$ generating $C_P$.

Note that $\Sigma$ permutes the elements of $\calp$ since if $\sigma \in \Sigma$
then $\sigma g_P \sigma^{-1}$ fixes the hyperplane $\sigma P$.   
Suppose that $\calb$ is a $\Sigma$-orbit in $\calp$ and fix $P_0 \in \calb$. 
Put $C = C_{P_0}$ and $g=g_{P_0}$.   Write $c = |C|$.

We write $\calr_\calb(\Sigma)$ to denote the set of all elements (reflections) in $\Sigma$ that are conjugate to an element of $C \setminus \{\id\}$.  
Thus $\calr_\calb(\Sigma) = \sqcup_{P \in \calb} (C_P \setminus \{\id\})$. 
Put $\calr_\calb(G) := G \cap \calr_\calb(\Sigma)$.

Recall that $\cals = \set{\gamma_1G, \gamma_2 G, \dots, \gamma_\ell G}$ denotes the set of
left cosets of $G$ in $\Sigma$.   Then $\Sigma$ acts on $\cals$ by left multiplication and permutes the cosets.

For $\sigma \in \Sigma$, write $b_i(\sigma)$ to denote the number of cycles of order $i$ 
in the cycle decomposition for the action of $\sigma$ on $\cals$, i.e., $b_i(\sigma)$ is the number of 
orbits in $\cals$ of size $i$ under the action of $\sigma$.

Let $\cals/C$ denote the set of $C$-orbits in $\cals$.  Then $|\cals/C| = \sum_{i=1}^\ell b_i(\sigma)$ and $\sum_{i=1}^\ell i b_i(\sigma) = \ell$, and, if $\sigma_1$ and $\sigma_2$ are conjugate reflections in $\Sigma$, then $b_i(\sigma_1)=b_i(\sigma_2)$, for all $i$, see the proof of \ref{lem:counting hyps} below.

Let $W$ denote the vector space spanned by $\cals$.  Then $\dim W^C = \dim W^g = |\cals/C| = \sum_{i=1}^\ell b_i(g)$.

\begin{lem}\label{lem:counting hyps}
    $\displaystyle
    |\calr_\calb(G)| 
    = \sum_{P\in\calb}\left(c\frac{\dim (W^{C_P})}{\dim(W)} -1\right).
    $
\end{lem}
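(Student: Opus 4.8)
The plan is to prove the identity by interpreting $\dim(W^{C_P})$ through the Cauchy--Frobenius (Burnside) orbit-counting formula and then double-counting the incidences between reflections in $\calr_\calb(\Sigma)$ and the cosets they fix. Throughout I will use the fact, already recorded in the excerpt for $C$ itself, that for any subgroup $H\le\Sigma$ the fixed space $W^H$ of the permutation module $W$ has the $H$-orbit sums as a basis, so that $\dim(W^H)$ equals the number of $H$-orbits on $\cals$; in particular $\dim W=\ell$.

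First I would record, for each $P\in\calb$, the orbit count of $C_P$ on $\cals$ by Burnside's lemma, splitting off the identity contribution:
$$c\,\dim(W^{C_P})=\sum_{h\in C_P}|\mathrm{Fix}_\cals(h)|=\ell+\sum_{h\in C_P\setminus\{\id\}}|\mathrm{Fix}_\cals(h)|,$$
where $\mathrm{Fix}_\cals(h)$ denotes the set of cosets fixed by $h$ and $|\mathrm{Fix}_\cals(\id)|=\ell$. Dividing by $\dim W=\ell$ and subtracting $1$ turns each summand appearing on the right-hand side of the Lemma into
$$c\frac{\dim(W^{C_P})}{\dim W}-1=\frac{1}{\ell}\sum_{h\in C_P\setminus\{\id\}}|\mathrm{Fix}_\cals(h)|.$$
Summing over $P\in\calb$ and using the disjoint union $\calr_\calb(\Sigma)=\sqcup_{P\in\calb}(C_P\setminus\{\id\})$ collapses the entire right-hand side of the Lemma to $\frac{1}{\ell}\sum_{h\in\calr_\calb(\Sigma)}|\mathrm{Fix}_\cals(h)|$.

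Next I would double-count the pairs $(h,\gamma G)$ with $h\in\calr_\calb(\Sigma)$ fixing $\gamma G$. Since $h\cdot\gamma G=\gamma G$ is equivalent to $h\in\gamma G\gamma^{-1}$, swapping the order of summation gives
$$\sum_{h\in\calr_\calb(\Sigma)}|\mathrm{Fix}_\cals(h)|=\sum_{\gamma G\in\cals}|\calr_\calb(\Sigma)\cap\gamma G\gamma^{-1}|.$$
The crucial observation is that $\calr_\calb(\Sigma)$ is stable under $\Sigma$-conjugation (it is the union of the full conjugacy classes of the elements of $C\setminus\{\id\}$), so $\gamma^{-1}\calr_\calb(\Sigma)\gamma=\calr_\calb(\Sigma)$ and hence $\calr_\calb(\Sigma)\cap\gamma G\gamma^{-1}=\gamma\,(\calr_\calb(\Sigma)\cap G)\,\gamma^{-1}=\gamma\,\calr_\calb(G)\,\gamma^{-1}$. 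Every such intersection therefore has cardinality $|\calr_\calb(G)|$ independently of $\gamma$, so the sum over the $\ell$ cosets equals $\ell\,|\calr_\calb(G)|$. Dividing by $\ell$ completes the proof.

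The main obstacle is the content of the last paragraph: one must recognize that the per-hyperplane identity $|G\cap C_P|=c\,\dim(W^{C_P})/\ell$ is \emph{false} in general and that the formula holds only after summing over the orbit $\calb$, and then exploit the conjugation invariance of $\calr_\calb(\Sigma)$ to make the intersection count constant across cosets. The remaining ingredients (Burnside's lemma, splitting off the identity, and the disjointness of the sets $C_P\setminus\{\id\}$) are routine; the real work is the double count together with the normalization by $\dim W=\ell$.
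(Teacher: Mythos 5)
Your proof is correct and is essentially the paper's own argument run in reverse: the paper first establishes your final double count, packaged as a general lemma about a conjugation-stable subset $A$ of a group acting transitively (so that $|A_x|$ is constant in $x$ and equals $|\calr_\calb(G)|$ at the base coset $G$), and then applies Burnside's lemma hyperplane by hyperplane, whereas you start from Burnside and finish with the double count. The mathematical ingredients — Burnside on each $C_P$, the decomposition $\calr_\calb(\Sigma)=\sqcup_{P\in\calb}(C_P\setminus\{\id\})$, and conjugation-stability of $\calr_\calb(\Sigma)$ together with transitivity on the cosets — are identical, so no further comparison is needed.
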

\begin{proof}
  Suppose $H$ is a finite group acting transitively on a set $X$.  Let $A \subset H$ be a subset stable under
  conjugation, i.e., $A$ is a union of conjugacy classes of $H$.   Consider the set 
  $Y := \{(\sigma,x) \in  A \times X \mid \sigma \cdot x = x\}$.
  Then $|A_{x'}| = |A_{x''}|$ for all $x',x'' \in X$ where 
  $A_x = \{\sigma \in A \mid \sigma \cdot x = x\}$.  
  Thus $|Y| = \sum_{x \in X} |A_x| = |X| |A_{x}|$.   Also
  $|Y| = \sum_{\sigma \in A} |X^\sigma|$ where $X^\sigma = \{x \in X \mid \sigma\cdot x = x\}$.
  Thus $|A_{x}|= (\sum_{\sigma \in A} |X^\sigma|)/|X|$ for all $x \in X$.
  
    We apply this with $A = \calr_\calb(\Sigma)$, $X=\cals$ and with $x$ being the coset $G$.  
    Since $\sigma$ fixes $x$ if and only if $\sigma \in G$ we have
    $A_{x} = \calr_\calb(\Sigma) \cap G = \calr_\calb(G)$.  Hence, writing  $\FP(\sigma)=|\cals^\sigma|$ for the number of cosets left fixed by $\sigma$,
    we have 
\begin{align*}
    |\calr_\calb(G)|  &= \frac{1}{\ell}\sum_{\sigma \in \calr_\calb(\Sigma)}\FP(\sigma) =
           \frac{1}{\ell}\sum_{P\in\calb} \sum_{\substack{{g\in C_P}\\ {g \ne \id}}}\FP(g)\\
            &= \frac{1}{\ell}\sum_{P\in\calb} \big(\sum_{g \in C_P} \FP(g) - \FP(\id)\big)
            = \frac{1}{\ell}\sum_{P\in\calb}  \big(\sum_{g \in C_P} \FP(g) - \ell \big)\\
            &= \frac{1}{\ell}\sum_{P\in\calb} \big(c \dim(W^{C_P}) - \ell\big)
            = \sum_{P\in\calb} \big(c\frac{\dim(W^{C_P)}}{\dim W} - 1\big)
\end{align*}
as required.
\end{proof}

We let $\calr(\Sigma)$ denote the set of all reflections in $\Sigma$ and put $\calr(G) := \calr(\Sigma) \cap G$.
Summing the equation from Lemma~\ref{lem:counting hyps} over all the $\Sigma$ orbits on $\calp$  yields the following.

\begin{prop}\label{prop: counting reflections}
$$
    |\calr(G)| = \sum_{P\in\calp}\left(|C_P|\frac{\dim (W^{C_P})}{\dim(W)} -1\right).
 $$
\end{prop}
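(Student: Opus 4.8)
The plan is to sum the identity of Lemma~\ref{lem:counting hyps} over the $\Sigma$-orbits on the set $\calp$ of reflecting hyperplanes, and then to observe that the resulting double sum collapses into a single sum over all of $\calp$. The substantive content of the proposition is already contained in the lemma; what remains is a bookkeeping argument that the local counts over orbits reassemble into the global count.

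First I would record the structural fact that makes the summation legitimate: every reflection $\mu \in \Sigma$ has fixed-point set of codimension exactly one, so it fixes a \emph{unique} reflecting hyperplane $P$ and hence lies in exactly one of the cyclic groups $C_P$. This gives the disjoint decomposition $\calr(\Sigma) = \sqcup_{P \in \calp}(C_P \setminus \{\id\})$. Grouping the hyperplanes according to the $\Sigma$-orbit $\calb$ to which they belong, and recalling that $\calr_\calb(\Sigma) = \sqcup_{P \in \calb}(C_P \setminus \{\id\})$, I obtain $\calr(\Sigma) = \sqcup_\calb \calr_\calb(\Sigma)$, the union being over the finitely many $\Sigma$-orbits $\calb$ in $\calp$.

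Intersecting with $G$ respects disjoint unions, so $\calr(G) = \calr(\Sigma) \cap G = \sqcup_\calb \calr_\calb(G)$ and therefore $|\calr(G)| = \sum_\calb |\calr_\calb(G)|$. At this point I would apply Lemma~\ref{lem:counting hyps} to each orbit $\calb$. The only point requiring a word of care is the constant $c = |C|$ appearing in that lemma: since $\Sigma$ permutes the subgroups $C_P$ by conjugation transitively within a single orbit, all stabilizers $C_P$ for $P \in \calb$ are conjugate and hence have the same order, so $c = |C_P|$ for every $P \in \calb$. Thus the lemma reads $|\calr_\calb(G)| = \sum_{P \in \calb}\big(|C_P|\frac{\dim(W^{C_P})}{\dim W} - 1\big)$.

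Summing over all orbits $\calb$ then recombines the inner sums, since the orbits partition $\calp$, yielding $|\calr(G)| = \sum_\calb \sum_{P \in \calb}\big(|C_P|\frac{\dim(W^{C_P})}{\dim W} - 1\big) = \sum_{P \in \calp}\big(|C_P|\frac{\dim(W^{C_P})}{\dim W} - 1\big)$, as claimed. There is no genuine obstacle here beyond the prior lemma; the step is purely combinatorial. The one thing I would be most careful to state explicitly is the uniqueness of the fixed hyperplane of a reflection, since that is exactly what guarantees that the decomposition of $\calr(\Sigma)$ is a true disjoint union and hence that no reflection is double-counted when passing from orbits back to individual hyperplanes.
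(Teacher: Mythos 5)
Your proof is correct and follows exactly the paper's approach: the paper's entire proof is the one-line remark that summing the identity of Lemma~\ref{lem:counting hyps} over the $\Sigma$-orbits on $\calp$ yields the proposition. The details you supply --- the disjointness of $\calr(\Sigma) = \sqcup_{P\in\calp}(C_P\setminus\{\id\})$ via uniqueness of the fixed hyperplane, and the constancy of $|C_P|$ along an orbit --- are precisely the bookkeeping the paper leaves implicit.
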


\section{The $G$-discriminant}\label{discriminant}
In this section we introduce the $G$-discriminant, $\Delta(G)$, 
a semi-invariant defined geometrically, and compute its degree 
which we show is equal to $\deg( \det M(\bth))$.

 Suppose that the hyperplane $P$ lies in the $\Sigma$ orbit $\calb$.
Define the integer
  $$
        e(G,P) = e(G,\calb) := \sum_{i\geq 2}\frac{ |C_P|}{2} b_i(g_P) (i-1).
    $$
    
    Note that $\sum_{i\geq 2}\frac{ |C_P|}{2} b_i(g_P) (i-1) = \sum_{i\geq 2}\frac{ |C_P|}{i} b_i(g_P) \binom{i}{2}$
    is an integer since $b_i(g_P) \ne 0$ implies that $i$ divides $|C_P|$.

\begin{defn}
 We define the $G$-discriminant, denoted $\Delta(G)$, by
 $$\Delta(G) = \prod_{P \in \calp} L_P^{e(G,P)}\ .$$ 
 Note that $\Delta(G)$ is only well-defined up to a scalar in the field $K$.
Recall that, if $\OK$ is a PID, then we 
assumed each linear form $L_P$ is primitive, and this implies that
$\Delta(G)\in \OK[V]$ is primitive, 
and thus $\Delta(G) $ is defined up to multiplication by a unit in $\OK$
in this case.
 \end{defn}
For each $\Sigma$ orbit $\calb$ on $\calp$ we define 
$\lambda_\calb := \prod_{P\in\calb}L_P$.
  Since $\Sigma$ permutes the planes in the orbit $\calb$ it follows that
$\lambda_{\calb}$ is a semi-invariant for $\Sigma$, i.e., there exists
some character $\chi$ of $\Sigma$ such that 
$\sigma \lambda_\calb = \chi(\sigma) \lambda_\calb$ for all $\sigma \in \Sigma$.  
Thus $\Delta(G)$, as a product of semi-invariants is itself a 
 semi-invariant for $\Sigma$.

\begin{remark}\label{G-discriminant for permutations}
  Suppose that $G \leq \Sn$.  The reflections in $\Sn$ are the transpositions
  and all transpositions are conjugate.  Thus for permutation groups 
  there is only one $\Sn$ orbit on $\calp$.  Recall the usual discriminant associated to the alternating group, $\disc=\prod_{1 \leq i < j \leq n}(x_i-x_j)$.  Then
  $\Delta(G) = \disc^e$ where
  $e$ is the number of orbits of size 2 under the action of one (any) 
  transposition on the set of left cosets $\Sn/G$.  Also, we note that for the alternating group $A_n$ in its usual representation, we have $e(A_n)=1$ so that $\Delta(A_n)=\disc$.  That is, our description of $\Delta(G)$ is a generalization of the usual discriminant for $A_n$.
\end{remark}

\begin{example}\label{ex:G-disc}
Consider the Shephard-Todd group $\Sigma = G(6,1,3)$   
consisting of the $3\times 3$ matrices  having one non-zero entry in each row and column with this entry being some $6^\text{th}$ root of unity. 
Let $\zeta$ denote a primitive $6^\text{th}$ root of unity and write
$K = \QQ(\zeta)$.
The subgroup $G$ of $\Sigma$ generated by the 2 matrices
$$ \begin{pmatrix}\zeta&0&0\\ 0&0&-1\\ 0&1&0\end{pmatrix},
\begin{pmatrix}\zeta&0&0\\ 0&1&0\\ 0&0&\zeta^{-1}\end{pmatrix}
$$ has order 216 and index 6.

There are $21$ reflecting hyperplanes in $G(6,1,3)$:
3 coordinate hyperplanes cut out by $x_i=0$ for $i=1,2,3$
and 18 cut out by the forms $x_i - \omega x_j$ where $1 \leq i < j \leq 3$
and $\omega=\zeta^k$ is some $6^\text{th}$ root of unity. 

The generators of the corresponding cyclic reflection subgroups form 2 conjugacy classes.
The group of reflections associated to $x_i=0$ is order 6 and consists
of diagonal matrices.  Let $\tau$ denote a generator for one of the reflection subgroups for first conjugacy class.  
A computation shows that 
$\tau$ acts on the cosets of $G$ via three 2-cycles, so $b_2(\tau)=3$ and
$b_i(\tau)=0$ for $i \neq 2$.
Therefore the exponent associated to these hyperplanes is 
$e(G,P)= \frac{6}{2}\cdot 3 (2-1) = 9$.
Therefore $x_1^9 x_2^9 x_3^9$ divides $\Delta(G)$.

The reflection group fixing the hyperplane cut out by $x_i - \omega x_j$ has order 2.  This group fixes 2 of the cosets and acts on the remaining 4 
cosets via two 2-cycles yielding an exponent of 2.
  Hence 
$\prod_{1\leq i < j\leq 3} \prod_{k=1}^6 (x_i - \zeta^k x_j)^2$ divides $\Delta(G)$.
Therefore $\Delta(G) = x_1^9 x_2^9 x_3^9 \prod_{1\leq i < j\leq 3} \prod_{k=1}^6 (x_i - \zeta^k x_j)^2$.
\end{example}

 Next we consider the degree of $\Delta(G)$.
\begin{thm}
We have
    $$
        \deg(\Delta(G)) = \frac{\ell}{2}(|\calr(\Sigma)| - |\calr(G)|)\ .
    $$
\end{thm}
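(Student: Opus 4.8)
The plan is to compute both sides as sums over the reflecting hyperplanes $P \in \calp$ and match them orbit by orbit. Since every linear form $L_P$ has degree $1$, the definition of the $G$-discriminant gives immediately
\[
\deg(\Delta(G)) = \sum_{P \in \calp} e(G,P) = \sum_{P \in \calp}\frac{|C_P|}{2}\sum_{i \geq 2} b_i(g_P)(i-1).
\]
First I would simplify the inner sum. Because $g_P$ generates $C_P$, the cycles of $g_P$ acting on $\cals$ are exactly the $C_P$-orbits, so $\sum_{i} b_i(g_P) = |\cals/C_P| = \dim(W^{C_P})$ while $\sum_i i\,b_i(g_P) = \ell = \dim W$. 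The $i=1$ term contributes nothing to $\sum_{i\geq 2} b_i(g_P)(i-1)$, so
\[
\sum_{i\geq 2} b_i(g_P)(i-1) = \sum_i i\,b_i(g_P) - \sum_i b_i(g_P) = \ell - \dim(W^{C_P}),
\]
giving $e(G,P) = \tfrac{|C_P|}{2}\bigl(\ell - \dim(W^{C_P})\bigr)$ and hence $\deg(\Delta(G)) = \tfrac12\sum_{P\in\calp} |C_P|\bigl(\ell - \dim(W^{C_P})\bigr)$.

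Next I would expand the right-hand side of the asserted identity over the same index set. Since a reflection fixes a unique hyperplane, the reflections of $\Sigma$ partition as $\calr(\Sigma) = \sqcup_{P\in\calp}(C_P\setminus\{\id\})$, so $|\calr(\Sigma)| = \sum_{P\in\calp}(|C_P|-1)$. The Proposition of Section~\ref{reflections} supplies $|\calr(G)| = \sum_{P\in\calp}\bigl(|C_P|\tfrac{\dim(W^{C_P})}{\dim W}-1\bigr)$. Subtracting, the constant $-1$ terms cancel and, using $\dim W = \ell$, one obtains
\[
|\calr(\Sigma)| - |\calr(G)| = \sum_{P\in\calp}|C_P|\Bigl(1 - \frac{\dim(W^{C_P})}{\ell}\Bigr) = \frac{1}{\ell}\sum_{P\in\calp}|C_P|\bigl(\ell - \dim(W^{C_P})\bigr).
\]
Multiplying by $\ell/2$ reproduces exactly the expression for $\deg(\Delta(G))$ found above, which completes the proof.

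The computation is almost entirely bookkeeping; the single point carrying genuine content is the cycle-counting identity $\sum_{i\geq 2} b_i(g_P)(i-1) = \ell - \dim(W^{C_P})$, which rests on the two facts that $\dim W = \ell$ and that $\dim(W^{C_P})$ equals the number of $C_P$-orbits on $\cals$ (the fixed space of a permutation module having dimension equal to the number of orbits). The only place where care is needed is tracking the factor $\dim(W^{C_P})/\dim W$ coming from the Proposition and confirming $\dim W = \ell$, so that the two half-integer sums align coefficient by coefficient across the orbits. No use of the semi-invariance of $\Delta(G)$ is required for the degree count.
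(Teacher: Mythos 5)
Your proof is correct and takes essentially the same route as the paper's: the same cycle-counting identity giving $e(G,P)=\tfrac{|C_P|}{2}\bigl(\ell-\dim W^{C_P}\bigr)$, the same partition $\calr(\Sigma)=\bigsqcup_{P\in\calp}(C_P\setminus\{\id\})$, and the same appeal to the Proposition of Section~\ref{reflections} for $|\calr(G)|$. The only difference is organizational: the paper transforms the left-hand side into the right-hand side, whereas you expand both sides to the common expression $\tfrac12\sum_{P\in\calp}|C_P|\bigl(\ell-\dim W^{C_P}\bigr)$.
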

\begin{proof}

We start with
    \begin{align*}
        e(G,P) &=  \frac{|C_P|}{2}\sum_{i\geq 1} b_i(i-1) 
                    = \frac{|C_P|}{2} (\sum_{i \geq 1} i b_i - \sum_{i \geq 1} b_i) \\
            &= \frac{|C_P|}{2} (\ell - \no(g_P))
            = \frac{|C_P| \ell }{2} - \frac{|C_P| \dim(W^{C_P})}{2}\ .
    \end{align*}

Hence
  \begin{align*}
      \sum_{P\in\calp}e(G,P) &= \frac{\ell}{2}\sum_{P\in\calp}|C_P| - \frac{1}{2}(\sum_{P\in\calp}|C_P|\dim(W^{C_P})) \\
        &= \frac{\ell}{2}(|\calr(\Sigma)| + \sum_{P\in\calp} 1) -\frac{1}{2}(\sum_{P\in\calp}|C_P|\dim(W^{C_P})) \\
        &= \frac{\ell}{2}(|\calr(\Sigma)|) - \frac{1}{2}\sum_{P\in\calp}|C_P|\dim(W^{C_P}) + \frac{\ell}{2} \sum_{P\in\calp} 1 \\
        & = \frac{\ell}{2}(|\calr(\Sigma)|) - \frac{\ell}{2}(\sum_{P\in\calp}|C_P|\frac{\dim(W^{C_P})}{\dim(W)} - 1) \\
        &= \frac{\ell}{2}(|\calr(\Sigma)|) - |\calr(G)|)
    \end{align*}
where the last displayed equation follows from Proposition~\ref{prop: counting reflections}.
\end{proof}

By the Theorem of Hochster-Eagon, $\Qi[V]^G$ is Cohen-Macaulay.  
Recall that for $1 \leq i \leq \ell$ the integer $a_i$ denotes the degree of the secondary invariant $\theta_i$.
We have the following corollary.
\begin{cor}
    $$
        \sum_{i=1}^{\ell} a_i = \sum_{P \in \calp} e(G,P)\ .
    $$
    In particular, $\deg (\det M(\bth)) = \deg (\Delta(G))$.
\end{cor}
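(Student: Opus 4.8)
The plan is to prove the two degree claims separately and reduce both to the single identity $\sum_{i=1}^\ell a_i=\sum_{P\in\calp}e(G,P)$. The equality $\deg M(\bth)=\sum_i a_i$ (where $\deg M(\bth)$ denotes $\deg\det M(\bth)$) is immediate: the $j$-th column of $M(\bth)=[\gamma_i\theta_j]$ consists of polynomials of degree $a_j=\deg\theta_j$, since each $\gamma_i$ acts linearly and preserves degree. Hence every term in the Leibniz expansion of $\det M(\bth)$ is homogeneous of degree $\sum_j a_j$, and as $\det M(\bth)\neq 0$ (by the linear independence argument already used for $\Sigma=\Sn$, which applies verbatim here since $\gamma_1=\id$) we get $\deg M(\bth)=\sum_j a_j$. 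Dually, $\deg\Delta(G)=\sum_{P\in\calp}e(G,P)$ because each $L_P$ is linear. So the whole corollary follows once I show $\sum_i a_i=\sum_P e(G,P)$.

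To compute $\sum_i a_i$ I encode the secondary degrees in the numerator $N(\lambda):=\sum_{j=1}^\ell\lambda^{a_j}=\calh(\Qi[V]^G,\lambda)/\calh(\Qi[V]^\Sigma,\lambda)$, so that $\sum_i a_i=N'(1)$; here I use that $\Qi[V]^G$ is free over the polynomial ring $\Qi[V]^\Sigma$ by Hochster--Eagon. Computing Hilbert series after extending scalars to $\CC$ (which does not change dimensions), the equivariant form of Molien's theorem --- Frobenius reciprocity applied to the permutation character $\sigma\mapsto\FP(\sigma)$ of $\Sigma$ on $\cals=\Sigma/G$ --- gives
\[
\calh(\Qi[V]^G,\lambda)=\frac1{|\Sigma|}\sum_{\sigma\in\Sigma}\frac{\FP(\sigma)}{\det(1-\lambda\sigma)},\qquad
\calh(\Qi[V]^\Sigma,\lambda)=\frac1{|\Sigma|}\sum_{\sigma\in\Sigma}\frac{1}{\det(1-\lambda\sigma)},
\]
with determinants taken on $V$. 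Multiplying both by $(1-\lambda)^n$ and writing $h_\sigma(\lambda):=(1-\lambda)^n/\det(1-\lambda\sigma)$, I get $N=\big(\sum_\sigma\FP(\sigma)h_\sigma\big)/\big(\sum_\sigma h_\sigma\big)$. Only $\sigma=\id$ contributes a pole of full order $n$, where $h_{\id}\equiv 1$; every other $\sigma$ has $h_\sigma(1)=0$, recovering $N(1)=\ell$. Since $h_\sigma$ vanishes at $\lambda=1$ to order $\dim V-\dim V^\sigma$, which is $1$ for a reflection and $\geq 2$ otherwise, differentiating the quotient at $\lambda=1$ leaves only reflections:
\[
N'(1)=\sum_{\sigma\in\calr(\Sigma)}\frac{\ell-\FP(\sigma)}{1-\zeta_\sigma},
\]
where $\zeta_\sigma\neq 1$ is the nontrivial eigenvalue of $\sigma$; the value $h_\sigma'(1)=-1/(1-\zeta_\sigma)$ is a one-line computation from $h_\sigma=(1-\lambda)/(1-\lambda\zeta_\sigma)$.

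It then remains to organize this sum hyperplane by hyperplane and match the definition of $e(G,P)$. Grouping the reflections into $C_P\setminus\{\id\}=\{g_P^k:1\leq k\leq|C_P|-1\}$, where $g_P$ acts on its moving line by a primitive $|C_P|$-th root of unity $\zeta_P$, the number of cosets moved by $g_P^k$ is $\ell-\FP(g_P^k)=\sum_{i\nmid k}i\,b_i(g_P)$. Substituting and exchanging the order of summation reduces the inner sums to the elementary identity $\sum_{k=1}^{c-1}(1-\omega^k)^{-1}=(c-1)/2$ for a primitive $c$-th root $\omega$; applied to the subsum over $\{k:i\mid k\}$ this gives $\sum_{i\nmid k}(1-\zeta_P^k)^{-1}=\frac{|C_P|}{2}(1-1/i)$, so that
\[
\sum_{k=1}^{|C_P|-1}\frac{\ell-\FP(g_P^k)}{1-\zeta_P^k}=\frac{|C_P|}{2}\sum_{i}b_i(g_P)(i-1)=\frac{|C_P|}{2}\sum_{i\geq 2}b_i(g_P)(i-1)=e(G,P).
\]
Summing over $P\in\calp$ yields $\sum_i a_i=N'(1)=\sum_P e(G,P)$, which together with the first paragraph gives both the stated identity and $\deg M(\bth)=\deg\Delta(G)$. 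The step I expect to be the main obstacle is the bookkeeping in this last paragraph: justifying the interchange of summation and correctly tracking, for each $k$, which cycle lengths $i$ satisfy $i\mid k$ (equivalently, which cosets $g_P^k$ fixes), since $\FP(g_P^k)$ depends on the divisibility relations among $k$, $i$, and $|C_P|$. The remaining ingredients --- the determinant degree count, the Molien set-up, and the residue $h_\sigma'(1)=-1/(1-\zeta_\sigma)$ --- are routine.
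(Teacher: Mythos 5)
Your proof is correct, but it takes a genuinely different route from the paper's. The paper disposes of this corollary in one line: it cites Stanley \cite{St77} (Corollaries 4.3 and 4.4), which together give $\ell\,|\calr(G)| + 2\sum_{i=1}^{\ell} a_i = \ell\,|\calr(\Sigma)|$, and then combines this with the immediately preceding theorem, $\deg \Delta(G) = \tfrac{\ell}{2}\bigl(|\calr(\Sigma)| - |\calr(G)|\bigr)$, whose proof rests on the Burnside-type count of reflections in $G$ (Lemma~\ref{lem:counting hyps}). You instead reprove, from scratch, exactly the case of Stanley's result that is needed: you write the secondary-degree polynomial $N(\lambda)=\sum_j \lambda^{a_j}$ as a ratio of Molien series (using Frobenius reciprocity to bring in the permutation character $\FP$), differentiate at $\lambda=1$ so that only reflections contribute, and then evaluate hyperplane by hyperplane via the root-of-unity identity $\sum_{k=1}^{c-1}(1-\omega^k)^{-1}=(c-1)/2$, landing directly on $\sum_{P\in\calp} e(G,P)$ without ever passing through the intermediate quantity $\tfrac{\ell}{2}\bigl(|\calr(\Sigma)|-|\calr(G)|\bigr)$. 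I checked the key steps --- the equivariant Molien formula, the quotient-rule computation $N'(1)=\sum_{\sigma\in\calr(\Sigma)}(\ell-\FP(\sigma))/(1-\zeta_\sigma)$, the fixed-coset count $\ell-\FP(g_P^k)=\sum_{i\nmid k} i\,b_i(g_P)$, and the reduction of the inner sums to $\tfrac{|C_P|}{2}(1-1/i)$ --- and they are all sound (the divisibility bookkeeping you flagged as the main risk works out because every cycle length $i$ divides $|C_P|$, so $\zeta_P^i$ is a primitive $(|C_P|/i)$-th root of unity). What your approach buys is self-containment: the corollary no longer depends on \cite{St77} nor on the paper's counting lemma, only on Molien's theorem and Hochster--Eagon freeness. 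What it costs is length; the paper's proof is essentially a citation, and its supporting counting lemma is in any case needed elsewhere in Section~\ref{reflections}.
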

\begin{proof}
This follows immediately from Stanley \cite[Corollary 4.3 and Corollary 4.4]{bible} which together imply that
    $$
        \ell |\calr(G)| +  2\sum_{i=1}^{\ell} a_i = \ell |\calr(\Sigma)|.
    $$
\end{proof}

\section{$\Delta(G)$ divides $\det M({\bth})$}\label{section:divides}

We have seen that $\det M(\bth)$ and $\Delta(G)$ share the same degree for any choice of $\bth$.
In this section we will show that $\Delta(G)$ divides $\det M(\bth)$
and thus $\det M(\bth)$ is a scalar multiple of $\Delta(G)$.  
The first step is the following theorem.

\begin{thm}
$\lambda_\calb^{e(G,\calb)}$ divides $\det M(\bth)$ in $\KK[V]$.  
\end{thm}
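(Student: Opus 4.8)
The plan is to prove that $\det M(\bth)$ vanishes to order at least $e(G,\calb)$ along each reflecting hyperplane $P\in\calb$, and then to assemble these local statements into divisibility by $\lambda_\calb^{e(G,\calb)}=\prod_{P\in\calb}L_P^{e(G,\calb)}$. Since conjugate reflections have the same cycle type on the coset set $\cals$, all the integers $e(G,P)$ for $P\in\calb$ equal $e(G,\calb)$, so it suffices to treat one plane $P_0\in\calb$, with $C=C_{P_0}$, $g=g_{P_0}$, $c=|C|$, and to show that $L_{P_0}^{e(G,\calb)}$ divides $\det M(\bth)$. The distinct forms $L_P$ ($P\in\calb$) are pairwise non-proportional, hence pairwise coprime linear (so irreducible) elements of the UFD $\overline{\KK}[V]$, so the individual divisibilities multiply to give $\lambda_\calb^{e(G,\calb)}\mid\det M(\bth)$.

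First I would choose coordinates adapted to $g$. As $g$ is a reflection fixing $P_0$, its action on $V^*$ is diagonalizable with an $(n-1)$-dimensional fixed space $(V^*)^g$ and a single eigenline spanned by $L_{P_0}$, whose eigenvalue $\omega$ is a primitive $c$-th root of unity. Setting $y_1=L_{P_0}$ and taking $y_2,\dots,y_n$ to be a basis of $(V^*)^g$, the element $g$ acts on $\overline{\KK}[V]$ by $y_1\mapsto\omega y_1$ and $y_k\mapsto y_k$ for $k\geq 2$.

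The heart of the argument is to exploit the cyclic action of $g$ on the rows of $M(\bth)$. Because each $\theta_j$ is $G$-invariant, the entry $\gamma_i\theta_j$ depends only on the coset $\gamma_i G$, and applying $g$ to the entries permutes the rows exactly by the permutation $g$ induces on $\cals$: one has $g\cdot(\gamma_a\theta_j)=\gamma_b\theta_j$ whenever $g\gamma_a G=\gamma_b G$. Grouping the rows by $C$-orbits on $\cals$ (a harmless reordering that only changes $\det$ by a sign), one orbit of size $i$ gives rows $R_0,R_1,\dots,R_{i-1}$ with $R_t=g^t\cdot R_0$, i.e. $R_t$ obtained from $R_0$ by $y_1\mapsto\omega^t y_1$; moreover $g^i$ stabilizes that orbit's base coset, so $R_0$ is invariant under $y_1\mapsto\omega^i y_1$ and its entries involve only powers $y_1^m$ with $m\equiv0\pmod{c/i}$. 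I would then apply, separately within each orbit block, the discrete Fourier transform $\tilde R_s=\sum_{t=0}^{i-1}\eta^{-st}R_t$ with $\eta=\omega^{c/i}$ a primitive $i$-th root of unity. This is an invertible row operation with nonzero constant (Vandermonde) determinant, and orthogonality of characters forces every entry of $\tilde R_s$ to be divisible by $y_1^{(c/i)s}$. Hence $\det$ acquires from this block the factor $y_1^{(c/i)(1+2+\cdots+(i-1))}=y_1^{c(i-1)/2}$, and summing over all orbits gives vanishing order $\sum_i b_i(g)\tfrac{c}{2}(i-1)=e(G,\calb)$ of $\det M(\bth)$ along $y_1=0$, that is, $L_{P_0}^{e(G,\calb)}\mid\det M(\bth)$.

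Finally I would combine the planes and descend the field: the same computation at every $P\in\calb$ yields $L_P^{e(G,\calb)}\mid\det M(\bth)$, and coprimality gives $\lambda_\calb^{e(G,\calb)}\mid\det M(\bth)$ in $\overline{\KK}[V]$; since both factors lie in $\KK[V]$ and divisibility of $\KK$-polynomials is insensitive to extending to $\overline{\KK}$, the divisibility already holds in $\KK[V]$. I expect the main obstacle to be the Fourier/block step: verifying that the exponent constraint $m\equiv0\pmod{c/i}$ together with the character sum forces $\tilde R_s$ to be divisible by exactly $y_1^{(c/i)s}$, so that the accumulated order is precisely $e(G,\calb)$ and not some smaller quantity.
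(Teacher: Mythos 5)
Your proof is correct and takes essentially the same route as the paper's: your discrete Fourier transform within each $C_P$-orbit block is precisely the paper's diagonalization of the cyclic permutation action of $C_P$ on the row space of $M$, with the same per-cycle exponent count $c(i-1)/2$ and the same final appeal to coprimality of the forms $L_P$. You merely make explicit two details the paper states briefly --- that an eigenvector-row with eigenvalue $\zeta_c^{(c/i)s}$ has all entries divisible by $L_P^{(c/i)s}$, and the descent of divisibility from $\overline{\KK}[V]$ to $\KK[V]$ --- so there is nothing to correct.
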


 \begin{proof}
 Note that $\Sigma$ acts on the set of rows of $M(\theta_*)$ by permutations.
 Let $W$ denote the $\CC$-vector space spanned by the rows of $M(\theta_*)$.  
 This is a subspace of $\oplus_{i=1}^\ell \CC[V]_{a_i}$.  
 We examine $W$ as a permutation representation of the cyclic group $C_P$ of order $c$.
 Since $C_P$ is abelian there is a basis $\set{r_1,r_2,\dots,r_\ell}$ of $W$ consisting of eigenvectors for $g_P$,
 i.e., working over $\CC$, we may diagonalize the action of $g_P$  on $W$.   Then $g_P r_i = \mu_i(g_P) r_i$ for some character $\mu_i$ of $C_P$.  We fix a choice of primitive $c^{\rm{th}}$ root of unity $\zeta_c\in\CC$ by writing $\det(g_P)=\zeta_c^{-1}$. 
 Note that since $g_p \in \GL(n,\OK)$, we have that $\zeta_c \in \OK$.

 The change of basis matrix $U$ which diagonalizes the 
 action of $g_P$ on W is an
invertible $\ell \times \ell$ matrix with entries in $K$.
Writing $\widetilde M := U M(\bth)$, we see that 
$\det (\widetilde M) = \kappa \det(M(\bth))$ where 
$\kappa = \det(U) \in K$.

We extend the set $\{L_P\}$
to a basis $\{L_P,L_2,\dots,L_n\}$ of $V^*$ where $\{L_2,\dots,L_n\}$ is dual to a basis for the reflecting hyperplane $P$.
Then $g_P \cdot L_P = \zeta_c L_P$ and $g_P \cdot L_i = L_i$
for $2 \leq i \leq n$.  
In this basis, every monomial is an eigenvector for $C_P$
with eigenvalue given by $\zeta_c^k$ with 
$0 \leq k \leq c-1$ where the degree of the monomial in $L_P$
is congruent to $k$ mod $c$.

 Now the cycle structure of $g_P$ acting on $\cals$ determines the permutation action of $C_P$ on $W$.
 A cycle of length $a$ corresponds to a transitive $a$ dimensional permutation $C_P$-subrepresentation of $W$.
 
  Diagonalizing the action of $g_P$ on this $a$ dimensional subrepresentation reveals eigenvectors for $g_P$
 with eigenvalues: $\zeta_{c}^{c/a}, \zeta_{c}^{2c/a},\dots,\zeta_{c}^{ac/a}=1$.
Thus the eigenvalue associated to $r_i$ is
$\mu_i(g_P)=\zeta_c^{cj/a}$ for some $j$ with
$0 \leq j < a$.  
In particular every monomial (when written in the basis $L_P, L_2,\dots, L_n$) appearing in 
each of the $\ell$ entries in $r_i$ must be divisible by $L_P^{c j/a}$.

Therefore a transitive permutation subrepresentation of $W$ of dimension $a$ 
contributes the factor 
$L_P^{\sum_{i=0}^{a-1} c i/a} = L_P^{c\binom{a}{2}/a}=L_P^{c(a-1)/2}$ to $\det \widetilde{M}$. 
 Taking all the cycles into account we see that
 $L_P^{(c/2)\sum_{i \geq 2} b_i(i-1)}$ divides 
 $\det \widetilde{M}$.
 Since for any two hyperplanes $P_1$ and $P_2$, the linear forms
 $L_{P_1}$ and $L_{P_2}$ are co-prime,  
  $\lambda_\calb^{e(G,\calb)}$ divides $\det \widetilde{M}$ in $\KK[V]$ for all orbits $\calb$.
  Therefore $\lambda_\calb^{e(G,\calb)}$ divides 
  $\det M(\bth)$ in $\KK[V]$ for all orbits $\calb$.
\end{proof}
  
Therefore
\begin{cor}\label{cor: main equation}
  $\Delta(G) = \prod_{\calb \subseteq \calp} \lambda_\calb^{e(G,\calb)}$ 
  divides $\det M(\bth)$ in $\KK[V]$.  
\end{cor}
\begin{proof}
This is clear since the $\lambda_\calb$ are pairwise co-prime in the UFD $K[V]$.  
\end{proof}

\begin{remark}
  If $\OK$ is a PID and $\bth$ is a set of secondaries 
  then $\mho(\bth) = \frac{\det M(\bth)}{\Delta(G)}$
   (up to multiplication by a unit in $\OK$). Note that this formula can be used to define $\mho(\bth)$ when $\OK$ is a PID because $\Delta(G)$ is primitive, thus Corollary~\ref{cor: main equation} implies it is the primitive part of $\det(M(\bth))$.
\end{remark}

\begin{example}\label{ex:G(2,2,n)}
Consider again the group $G \leq \Sigma = G(6,1,3)$ of order 216 defined in Example~\ref{ex:G-disc}.
Primary invariants are provided by the generators of 
$$K[V]^\Sigma = 
K[x_1^6 + x_2^6 + x_3^6,
x_1^{12} + x_2^{12} + x_3^{12},
x_1^{18} + x_2^{18} + x_3^{18}]\ .$$

For these 3 primaries, one choice of secondary invariants is provided by
$\theta_1 = 1$,
$\theta_2 = x_2^6 + x_3^6$,
$\theta_3 = x_1^3 x_2^3 x_3^3$,
$\theta_4 = x_2 ^{12} + 2 x_2^6 x_3^6 + x_3 ^{12}$,
$\theta_5 = x_1^3 x_2^9 x_3^3 + x_1^3 x_2^3 x_3^9$,
$\theta_6 = x_1^3 x_2 ^{15} x_3^3 + 2 x_1^3 x_2^9 x_3^9 
+ x_1^3 x_2^3 x_3^{15}$.

Choosing  coset representatives and forming
the matrix $M(\theta_*)$ we compute
$\det M(\theta_*) = 8 \Delta(G)$.
\end{example}

    \begin{remark}\label{evaluation}
  Note that if $\OK$ is a PID then for any choice of secondaries $\bth$ in $\OK[V]^G$, we may compute $\mho(\bth)$ efficiently by evaluating at a point $z \in V$:
  $$\mho(\bth) = \frac{\det M(\bth)\rvert_z}{\Delta(G)\rvert_z}\quad\text{(up to associates)}$$
for any point $z$ which does not lie on any reflecting hyperplane.  This allows us to compute
in the ring $\OK$ or $K$ rather than in $\OK[V]$ or $K[V]$.  
This makes the computation much simpler and faster.
\end{remark}

\begin{remark}
Recall $\disc=\prod_{i>j} (x_i-x_j)$.   
Note that when $\Sigma=\Sn$, 
Remark~\ref{G-discriminant for permutations}, implies
$\det M(\bth) = \mho(\bth) \disc^e$ where $e \in \NN$ and 
$\det M(\bth) = \disc^e$ if $G$ is generated by 
bi-reflections and $\bth$ is a universal set of 
secondaries.  
This provides a family of matrix 
factorizations for powers of the discriminant.
\end{remark}

\begin{example}\label{ex: mV2}
Consider the group $G=C_2=\{e,\sigma\}$ of order 2 and its $2a$ dimensional rational permutation representation $V=a\, V_2$ given by the direct sum of $a$ copies of its regular representation.  We fix coordinates by letting $\{x_1,y_1,\dots,x_a,y_a\}$ be a basis for $V^*$ where $\sigma$ acts by interchanging $x_i$ with $y_i$ for $i=1,\dots,a$.  Then ${C_2}$ is a subgroup of the  group $\Sigma$ of order $2^a$ generated by $\sigma_1,\dots,\sigma_a$ where $\sigma_i$ is the permutation that exchanges $x_i$ and $y_i$ and fixes $x_j$ and $y_j$ for $j\ne i$, $1 \le i,j \le a$.  Thus $\Sigma$ is a Young subgroup of $S_{2a}$ and $\sigma = \sigma_1 \cdots \sigma_a \in {C_2}$.  

For $a \geq 3$ the group ${C_2}$ contains no bireflections and so $\ZZ[V]^{C_2}$ is not Cohen-Macaulay.  Since the only prime dividing $|{C_2}|$ is 2 we must have that 2 is a bad prime and all other primes are good.

It is easy to see that $\QQ[V]^\Sigma = \QQ[r_1,\dots,r_a,s_1,\dots,s_a]$ where  $r_i = x_i + y_i$ and $s_i = x_i y_i$ for $1 \le i \le a$.  This follows from Stanley 
\cite[Corollary 4.4]{bible} since the $r_i$ and $s_i$ form a homogeneous system of parameters and the product of their degrees is the order of $\Sigma$.

Let $\calb$ denote the following set of ${C_2}$-orbit sums:
$$\calb := \{\calz(m) \mid \deg(m) \text{ is even}, m \text{ a monomial dividing }y_1 y_2 \cdots y_a\}\ .$$
Molien's Theorem tells that that the Hilbert polynomial of $\QQ[V]^{C_2}$ as a free module over $\QQ[V]^\Sigma$ is $\sum_{i=0}^{\lfloor a/2 \rfloor}\binom{a}{2i}\lambda^{2i}$.
It can be shown that the $2^{a-1}$ elements of $\calb$ 
form a set of secondaries for $\QQ[V]^{C_2}$.

We specialize to the case $a=4$.  Then $\mho(\calb)=8$,
as may be computed.  
Consider the $4$ invariants of degree $3$ of the form $q_{ijk}=\calz(y_iy_jy_k)=y_iy_jy_k+x_ix_jx_k$, $1\le i < j <k \le 4$.  
The ring $\QQ[4V_2]^{C_2}$ is Cohen-Macaulay, hence we have unique expressions
$$
q_{ijk} = \frac{1}{2}r_iq_{jk} + \frac{1}{2}r_jq_{ik} + \frac{1}{2}r_kq_{ij} - \frac{1}{2}r_ir_jr_k\, .
$$
Here $q_{ij}=x_ix_j+y_iy_j$. This implies that $\mathbb{F}_2[4V_2]^{C_2}$ is not Cohen-Macaulay. 
Computations show that 4 cubic invariants, such as all 4 of the $q_{ijk}$, are needed as secondaries for $\ZZ[4V_2]^{C_2}$, see Example~\ref{mv2_comp}.   
 \end{example}

\section{The Algorithm}\label{section:constructing}

In Section~\ref{existence} we gave a proof of the existence of a universal set of secondaries.  
In this section we give an algorithm to construct a set of universal secondaries for permutation representations. 
Combined with Remark~\ref{evaluation}, we obtain an algorithm which exactly determines all primes for which $\FF_p[V]^G$ is Cohen-Macaulay. Our algorithm uses linear algebra calculations over the integers. In this section, we return to considering $G$ as a permutation group with $G \leq \Sigma \leq S_n$ for a Young subgroup $\Sigma$.

Recall that $\tau(d)$ denotes the number of secondary
invariants of degree $d$ for $\QQ[V]^G$. Any set of universal secondaries forms a set of secondaries for
$\QQ[V]^G$ and thus must consist of $\tau(d)$ homogeneous
elements of degree $d$ for all $d$.

Let $\omega\in \ZZ[V]^\Sigma$ denote a non-zero element of degree 1.
For example, we may take $\omega$ to be the first elementary 
symmetric function: $\omega = x_1 + x_2 + \dots + x_n$.
Recall the map $\res_X : \QQ[V]^G \to \QQ^\ell$ from 
Section~\ref{good primes} given by restriction to a set $X=\phi^{-1}(\pi_\Sigma(z))$
where we require the stabilizer in $\Sigma$ of $z$ to be trivial.  
It follows from Remark~\ref{rem: lin ind over Q} that this map is a surjection.
For the algorithm, we need $z$ to lie in the lattice $\ZZ^n \subset \QQ^n$ and further 
to satisfy $\omega(z)\neq 0$.

We denote by $\QL_d$ the $\QQ$-vector space $\res_X(\QQ[V]^G_d)$.
Furthermore $\RL_d$ denotes the $\RR$-vector space $\res_X(\RR[V]^G_d)
= \QL_d \otimes_\ZZ \RR$.
Note that if $f \in \QQ[V]_{d-1}$ then $\omega f \in \QQ[V]_{d}$
with $\res_X(f)=\omega(z)^{-1} \res_X(\omega f) \in \QL_{d}$.  
Therefore, we have $\QL_{d-1}\subseteq \QL_{d}$ and a filtration
$$\{0\} \subsetneq \QL_0 \subseteq \QL_1 \subseteq \dots \subseteq \QL_t
=\QQ^\ell$$ where we use $t$ to denote $t=a_\ell=\deg(\theta_\ell)$.

Define $L_d$ to be the lattice of points in $L_d$ with integer coordinates, 
i.e., $L_d := \QL_d \cap \ZZ^\ell$.
Then 
$$\{0\} \subsetneq L_0 \subseteq L_1 \subseteq \dots \subseteq L_{t}
=\ZZ^\ell\ .$$

By construction, the quotients $L_d/L_{d-1}$ are torsion-free, so that each $L_d/L_{d-1}$ inherits the structure of a Euclidean lattice.  Geometrically we can view $L_d/L_{d-1}$ via the usual inner product on $\RR^\ell$, that is, there is a projection 
$\varepsilon_d: \QL_d \twoheadrightarrow \QL_{d}/\QL_{d-1}$
of $L_d \subseteq \RR^\ell$ onto the orthogonal complement of 
$\RL_{d-1}\subseteq \RR L_d$.  
Suppose that $A$ is a full rank lattice in $L_{d-1}$ with the covolume of $A$ in $\RL_{d-1}$ 
being $a$.  Let $B$ be a lattice in $L_d$ such that $A \cap B = \{0\}$ and 
$\rank(B) = \dim L_d - \dim L_{d-1}$.  Then the covolume of $A\oplus B$ in $\RL_d$ is 
$ab$ where $b$ is the covolume of $\varepsilon_d(B)$ in the orthogonal complement of $\RL_{d-1}$.
We will use this fact in the proof of Lemma~\ref{lem: covol prod}.

We also consider the lattice 
$M_d := \res_X(\ZZ[V]_d^G) \subseteq L_d$, the integer span of the evaluations of the orbit sums of monomials of degree $d$.  
Then $\varepsilon_d(M_d)$ is a lattice of full rank in $\QL_d/\QL_{d-1}$ and thus of finite index in $L_d/L_{d-1}$.

Finally, we introduce a sequence of lattices associated to a (any) set of secondaries 
$\bth \subset \ZZ[V]^G$.  We write $N_d^\theta$ to denote the sublattice of $M_d$
generated by the vectors $\{\res_X(\theta_j) \mid \deg(\theta_j) = d\}$.

Recall that the set $\{\res_X(\theta_1), \res_X(\theta_2),\dots,\res_X(\theta_\ell)\}$
must be linearly independent over $\QQ$.
Thus $\varepsilon_d(N^\theta_d)$ has full rank in $L_d/L_{d-1}$.

The dimension of the vector space
$\QL_d/\QL_{d-1}$ and the ranks of the lattices $L_d/L_{d-1}$, $M_d/(M_d \cap L_{d-1})$ and $N^\theta_d$
all coincide with the number $\tau(d)$. 
This is also the $\QQ$-dimension of the vector space $(\QQ[V]^G/I(\QQ))_d$
where $I(\QQ)$ is the ideal introduced in Section~\ref{sec: intro}.

Since $I(\QQ)_d \supseteq \omega\, \QQ[V]_{d-1}^G$ and since $\res_X(\omega f) = \omega(z)\res_X(f)$, 
it follows that 
$\res_X(I(\QQ)_d) = \QL_{d-1}$.
Thus we have the following commutative diagram of $\QQ$-vector spaces with exact rows and columns:

\begin{tikzcd}
0 \arrow[r] &I(\QQ)_d \arrow[r] \arrow[d, "\res_X"] &\QQ[V]_d^G \arrow[r,"\epsilon_d"] \arrow[d,"\res_X"]
     &(\QQ[V]^G/I(\QQ)_d \arrow[r] \arrow[d,dashed,"\kappa_d"] &0\\
0 \arrow[r] &\QL_{d-1} \arrow[r] \arrow[d] &\QL_d \arrow[r,"\varepsilon_d"] \arrow[d] &\QL_d/\QL_{d-1} \arrow[r] &0\\
& 0 & 0\\
\end{tikzcd}

Here $\varepsilon_d$ is the projection onto the orthogonal complement of $\QL_{d-1}$ in $\QL_d$.

  This diagram implies the existence of the surjective linear transformation
  $$\kappa_d : (\QQ[V]^G/I(\QQ))_d \longrightarrow \QL_d/\QL_{d-1}\ .$$
  In fact, since its domain and codomain share the same dimension, $\kappa_d$ is a vector space isomorphism.

  We now have suitable notation to describe the algorithm.

\begin{algorithm}
\caption{Find Universal Secondaries}\label{algorithm}
\begin{algorithmic}[1]
 \State $L_{-1} \leftarrow \{0\}$
\ForAll{$d \in \{0\dots t\}$}
\If {$\tau(d) = 0$}
\State $L_d \leftarrow L_{d-1}$
\Else
\State $\QL_d \leftarrow \res_X(\QQ[V]_d^G)$
\State $L_{d} \leftarrow \QL_d \cap \ZZ^\ell$
\State Compute a lattice basis $\{\bar{f}_1,\bar{f}_2,\dots,\bar{f}_{\tau(d)}\}$ 
for $\varepsilon_d(M_d) \subseteq L_d/L_{d-1}$.
\State Lift each $\bar{f}_i$ to an element
$f_i \in \ZZ[V]_d^G \cap (\kappa_d\circ\epsilon_d)^{-1}(\bar{f}_i)$
\State $\bth \leftarrow \bth \cup \{f_1,f_2,\dots,f_{\tau(d)}\}$.
\EndIf
\EndFor
\Return $\bth$.
\end{algorithmic}
\end{algorithm}

Step 8 of the algorithm requires finding a lattice basis for a lattice given a
generating set for the lattice.  
There are several algorithms for doing this, mainly variations of the LLL-algorithm. 
See Lenstra \cite[Section~14]{L} for a discussion of such algorithms.

The rest of this section is devoted to proving the correctness of Algorithm~1.

\begin{lem}\label{lem: division minimum}
    A set $\bth \subseteq \ZZ[V]^G$ of  secondaries is universal if
    $\det(M(\bth)\vert_z)$ divides $\det(M(\eta_*)\vert_z)$
    for every set of secondaries 
    $\eta_* \subseteq \ZZ[V]^G$. 
\end{lem}
\begin{proof}
Every bad prime must divide $\mho(\bth)$ for every set of secondaries $\bth$.
Conversely, $\bth$ is universal if no good prime divides $\mho(\bth)$.
Since we know from Section~\ref{existence} that there do exist sets of universal secondaries, it follows that
a set of secondaries $\bth \subseteq \ZZ[V]^G$ such that 
    $\mho(\bth)$ divides $\mho(\eta_*)$
    for every set of secondaries 
    $\eta_* \subseteq \ZZ[V]^G$ must be universal. 
Since the denominator of $(\det M(\bth)\vert_z)/(\Delta(G)\vert_z)$ is independent of $\bth$, the lemma follows.
\end{proof}

\begin{lem}\label{lem: lifting}
  Any homogeneous basis for the graded vector 
  space $\bigoplus_d\, {\QL_d}/{\QL_{d-1}}$ pulls back
  to a set of secondaries in $\QQ[V]^G$.
\end{lem}
\begin{proof}
  We have that $\dim_{\QQ}\,((\QQ[V]^G/I(\QQ))_d)=\tau(d)=\dim_{\QQ}(\QL_d/\QL_{d-1}).$
    Any $\QQ$-basis
of $\QL_d/\QL_{d-1}$ pulls back along the isomorphism $\kappa_d$
 to a $\QQ$-basis of $(\QQ[V]^G/I(\QQ)_d$. Amalgamating the pull backs along 
 $\epsilon_d$ for all those
$d$ such that $\QL_d/\QL_{d-1}$ is nontrivial, we obtain a graded $\QQ$-basis
for $\QQ[V]^G/I(\QQ)$, which by the graded Nakayama lemma lifts to a homogeneous
$\QQ[V]^\Sigma$-basis for $\QQ[V]^G$, i.e., to a set of secondaries.
\end{proof}

\begin{lem}\label{lem: covol prod}
    $$|\det(M(\bth))\vert_z| =\prod_{d \text{ such that }\tau(d)\ne 0} \cov \varepsilon_d(N^\theta_d)$$
\end{lem}
\begin{proof} 
 Consider the lattice $Y_d$ spanned by $\{\theta_i(z) \mid \deg(\theta_i)\leq d\}$. 
 Then $Y_d = \oplus_{i=1}^d N_i^\theta$ since $\{\theta_1(z),\theta_2(z),\dots,\theta_\ell(z)\}$ 
 is linearly independent over $\QQ$.
The columns of the matrix $M(\bth)\vert_z$  are the vectors $\res_X(\theta_j)$ for 
$j = 1,\dots,\ell$, and the number $|\det(M(\bth))\vert_z|$ is the covolume of
the lattice generated by these integer vectors.  Thus $|\det(M(\bth))\vert_z|=\cov Y_t$.
 
 We proceed by induction on $d$ to show that 
 $$\cov Y_d = \prod_{i \text{ such that }\tau(i)\ne 0\text{ and } i \leq d} 
 \cov \varepsilon_i(N_i^\theta)\ .$$
 For the base of the induction, $\epsilon_0:\RL_0 \to \RL_0$ is the identity map and 
 it is tautological that the rank one lattice $Y_0=N_0^\theta$ in $\RL_0$
 spanned by $\theta_1(z)$ has covolume $\cov \varepsilon_0(N_0^\theta)$.  
 For the induction step we suppose that the covolume of $Y_{d-1}$ is 
 $\prod_{i \text{ such that }\tau(i)\ne 0\text{ and } i \leq d-1} \cov \varepsilon_i(N^\theta_i)$
 and that $\tau(d) \neq 0$.  Then $Y_d = Y_{d-1} \oplus N^\theta_d$ has full rank in $\RL_d$,
 $Y_{d-1}$ has full rank in $\RL_{d-1}$
 and the covolume of $Y_d$ is given by the product of
 the covolume of $Y_{d-1}$ in $\RL_{d-1}$ with the covolume of $\varepsilon_d(N_d^\theta)$ in 
 the orthogonal complement of $\RL_{d-1}$ in $\RL_d$.  This completes the proof of the induction step. 
\end{proof}

We now show how the above three lemmas imply the correctness of Algorithm~1.
\begin{theorem}
    Algorithm~1 produces a set of universal secondaries.
\end{theorem}
\begin{proof}
The algorithm finds a vector space basis for the graded vector space 
$\bigoplus_d\, {\QL_d}/{\QL_{d-1}}$.
By Lemma~\ref{lem: lifting}, this pulls back to a set of secondaries $\bth$ for $\QQ[V]^G$.
For an arbitrary set of secondaries $\eta_*$,  the lattice 
$\varepsilon_d(N_d^\eta)$ 
has finite index $r_d$ in the lattice
$\varepsilon_d(M_d)$.  Then by Lemma~\ref{lem: covol prod},
\begin{align*}
|\det(M(\eta_*))\vert_z| &=
\prod_{d \text{ such that }\tau(d)\ne 0} \cov \varepsilon_d(N^\eta_d)
=\prod_{d \text{ such that }\tau(d)\ne 0} r_d \cov \varepsilon_d(M_d)
\ .
\end{align*}
Similarly,
\begin{align*}
|\det(M(\theta_*))\vert_z| &=
\prod_{d \text{ such that }\tau(d)\ne 0} \cov \varepsilon_d(N^\theta_d)
=\prod_{d \text{ such that }\tau(d)\ne 0} \cov \varepsilon_d(M_d)\ .
\end{align*}
Thus $$|\det(M(\eta_*))\vert_z| = \bigg(\prod_{d \text{ such that }\tau(d)\ne 0} r_d\bigg) |\det(M(\theta_*)\vert_z)|$$ and we see that 
for every set of secondaries $\eta_*$, the value $|\det(M(\eta_*))\vert_z|$ is a positive integer multiple of 
$|\det(M(\theta_*))\vert_z|$.
Hence, by Lemma~\ref{lem: division minimum}, the secondaries $\bth$ produced by
the algorithm form a universal set of secondaries.  
\end{proof}

\begin{remark}
Thus
$$\mho(G)= \frac{\prod_{\tau(d) \neq 0}\cov(\varepsilon_d(M_d))}{\Delta(G)\vert_z}$$
and this integer divides $\mho(\bth)$ for every choice of secondaries 
$\bth \subset \ZZ[V]^G$.
\end{remark}

\begin{remark}
The quotient $\ZZ[V]_d^G / I(\ZZ)_d = (\ZZ[V]^G / I(\ZZ))_d$ is a finitely 
generated abelian group for each degree $d$.  
Here $I(\ZZ)_d = \big(\oplus_{i=1}^d\ZZ[V]^\Sigma_i\cdot\ZZ[V]^G_{d-i})\big) \subset I(\QQ)_d$.
Comparing with $\QQ[V]^G_d/I(\QQ)_d$ we see that
$\ZZ[V]_d^G / I(\ZZ)_d$ has free rank $\tau(d)$.
If $\ZZ[V]_d^G / I(\ZZ)_d$ is a free $\ZZ$-module
for all $d$ then $\ZZ[V]^G$ is Cohen-Macaulay. To see this 
suppose that $\ZZ[V]_d^G / I(\ZZ)_d$ is a free $\ZZ$-module for all $d$.
Choose $\theta_{i_1},\dots,\theta_{i_{\tau(d)}}$ as a $\ZZ$-basis for $\ZZ[V]_d^G / I(\ZZ)_d$.
Then $\oplus_{j=1}^{\tau(d)} \ZZ\cdot \theta_{i_j} \oplus I(\ZZ)_d = \ZZ[V]_d^G$ for all $d$
showing that $\ZZ[V]^G$ is Cohen-Macaulay.

Thus
if $\ZZ[V]^G$ is not Cohen-Macaulay there is at least one degree $d$  such that the quotient 
$\ZZ[V]_d^G / I(\ZZ)_d$ is not 
torsion free.  In that case, there exists $f \in \ZZ[V]_d^G$ and $q \in \NN$ such that
$f \notin I(\ZZ)_d$ but $q f \in I(\ZZ)_d$.
 Such an element $f$ is needed as an additional secondary invariant for
$\ZZ[V]^G$ and a corresponding invariant is needed as an additional generator
for $\FF_p[V]^G$ for each $p$ dividing $q$.
\end{remark}

\begin{example}\label{mv2_comp}
  We revisit Example~\ref{ex: mV2} and use the notation from that example.
  Here $I(\QQ)$ is the ideal of $\QQ[V]^{C_2}$ generated by $r_i$, $s_i$, $1\le i \le a$. For $a \geq 3$, each $q_{ijk} \in \ZZ[a\,V_2]_3^{C_2}$.
Also  $q_{ijk}= \frac{1}{2} r_iq_{jk} + \frac{1}{2} r_jq_{ik} + \frac{1}{2} r_kq_{ij} - \frac{1}{2} r_{i}r_jr_k$.  Thus 
$2 q_{ijk} \in I_3(\ZZ) \subset I_3$ and $q_{ijk}$ is a torsion element in the quotient 
$\ZZ[a\,V_2]_3^{C_2}/I(\ZZ)_3$.  From this it follows that a set of secondaries for $\FF_2[a\,V_2]^{C_2}$
must include $\binom{a}{3}$ invariants of degree 3.  
However all rational secondaries have even degree.

Consider the case $a=3$ and use the point $z=(1,0,1,0,1,0)$ and coset representatives
$\gamma_1 = e,\gamma_2=\sigma_1, 
\gamma_3 = \sigma_2, \gamma_4 = \sigma_3$.
Then 
$w_1=z=(1,0,\,1,0,\,1,0)$,
  $w_2=(0,1,\,1,0,\,1,0)$,
  $w_3=(1,0,\,0,1,\,1,0)$ and 
  $w_4=(1,0,\,1,0,\,0,1)$.

We claim $\res_X(q_{123})=(1,0,0,0) \in L_2 \setminus M_2$
but $2\res_X(q_{123}) \in M_2$. Since $\res_X(r_1^2)=\res_X(r_2^2)=\res_X(r_3^2)=\res_X(r_1)$,
the lattice $M_2$ is generated by the set 
$$\{\res_X(r_1^2),\res_X(q_{12}),\res_X(q_{13}),\res_X(q_{23})\}
  = \{(1,1,1,1),(1,0,0,1),(1,0,1,0),(1,1,0,0)\}\ .$$
Thus
$\res_X(r_1 q_{23} + r_2 q_{13} + r_3 q_{12} - r_1 r_2 r_3) = (2,0,0,0)
 =\res_X(q_{23} + q_{13} + q_{12} - r_1^2) \in M_2$ and hence 
 $(1,0,0,0)=\res_X(q_{123}) \in L_2$.   However $(1,0,0,0) \notin M_2$.
 Therefore the covolume of $M_2$ in $\RR L_2$ is at least 2.

Suppose $a=4$ and choose $z=(1,0,1,0,1,0,1,0)$.  Then a computation yields $\mho({C_2})=8$.  Each of the 
4 elements $q_{ijk}$ is required as an extra secondary.
However 
$$\res_X(q_{123}) + \res_X(q_{124})+\res_X(q_{134})-
  \res_X(q_{12}+q_{13}+q_{14}-1) = \res_X(q_{234})$$
 even though $q_{234}$ does not lie in the 
 $\ZZ[V]^\Sigma$-module generated by 
 $$\{1,q_{12},q_{13},q_{14},q_{23},q_{24},q_{34},q_{123},q_{124},q_{134}\}\ .$$
Thus the restriction mapping preserves enough information to detect bad primes, but it
does not preserve all the structure of the $M_d$.  
\end{example}

  \begin{remark}
  In all the examples described above there exists a choice
  of universal secondaries with each secondary invariant
  being a single orbit sum.   Although we expect that this is not always possible we do not know of any example 
  failing this condition.
 \end{remark}

\section{Conclusion}\label{conclusion}
Any permutation representation of a finite group $G$ is defined over every ring.
Over a field of characteristic zero the ring of invariants is always 
Cohen-Macaulay.  The invariants $\ZZ[V]^G$ are closely related to 
$\FF_p[V]^G$ where $\FF_p$ is the field of prime order $p$.   
However, the latter ring of invariants may fail to be Cohen-Macaulay when 
$p$ divides the order of $G$.   
We defined a square matrix $M(\bth)$ associated to 
a set of secondary invariants for $\QQ[V]^G$,
$\bth=(\theta_1,\theta_2,\dots,\theta_\ell) \in \ZZ[V]^G$.
The determinant of $M(\bth)$ is an integer polynomial and
we denote its content by $\mho(\bth)$.
The mod $p$ reductions of the elements of $\bth$  form a set of
secondary invariants for 
the ring $\FF_p[V]^G$
if and only if $p$ does not divide $\mho(\bth)$.

  We showed also that there always exists a set of universal secondary invariants
$\bth=(\theta_1,\theta_2,\dots,\theta_\ell)\in\ZZ[V]^G$ such that their reductions mod $p$ yield
$\FF_p[V]^G = \oplus_{j=1}^\ell \FF_p[V]^{S_n} \btheta_j$ for all primes $p$ for
which $\FF_p[V]^G$ is Cohen-Macaulay.  We gave an
algorithm to construct a set of universal secondaries, $\bth$, using linear algebra over $\ZZ$, with the property that $\mho(\bth)$ divides $\mho(\eta_*)$ for every set of secondaries $\eta_* \in \ZZ[V]^G$.  Defining the deficiency of $G$ to be $\mho(G)=\mho(\bth)$ for such
universal secondaries, we have $\FF_p[V]^G$ is Cohen-Macaulay if and  only if $p$ does not divide the deficiency of $G$.   

For a subgroup $G$ of a finite unitary reflection group $\Sigma$,
defined over the ring of integers $\OK$ of a number field $K$
we defined a $\Sigma$-semi-invariant $\Delta(G)$.  
This semi-invariant was shown to be a scalar multiple of $\det M(\bth)$ for each sequence of secondary invariants $\theta_1,\theta_2,\dots,\theta_\ell \subset K[V]^G$.
Under the assumption $\OK$ is a PID,
the primitive part of $\det M(\bth)$ is $\mho(\bth)$ and 
thus $\det M(\bth)=\mho(\bth) \Delta(G)$ (up to units in $\OK$).
Evaluating both sides of this
equation at a point provides an efficient method to find either
$\mho(\bth)$ or $\mho(G)$.

{\bf {Acknowledgements.}}
The second author was partially supported by the
National Sciences and Engineering Research Council of Canada
and by the Canadian Defence Academy Research Programme. 
We thank Mike Roth for many helpful discussions.
The computer algebra system Magma~\cite{magma} 
was extremely helpful in computing and studying examples.
Sara Stephens implemented the algorithm described here in both Magma and Sage. 
This article was inspired by reading the preprint of Nicolas Borie and 
Nicolas Thi\'ery \cite{BT}.  We thank Ben Blum-Smith and 
Nicolas Thi\'ery for helpful comments.  
We also acknowledge the outstanding and detailed contributions of an 
anonymous referee which led to substantial improvements on earlier drafts
of this article.

\begin{bibdiv}
\begin{biblist}

\bib{AA}{article}{
    AUTHOR = {Almuhaimeed, Areej},
     TITLE = {The {C}ohen-{M}acaulay property of invariant rings over the
              integers},
   JOURNAL = {Transform. Groups},
  FJOURNAL = {Transformation Groups},
    VOLUME = {27},
      YEAR = {2022},
    NUMBER = {2},
     PAGES = {343--369},
      ISSN = {1083-4362,1531-586X},
   MRCLASS = {13A50 (13F20 13H10)},
  MRNUMBER = {4431166},
MRREVIEWER = {Adriana\ Ciampella},
       DOI = {10.1007/s00031-020-09612-1},
       URL = {https://doi.org/10.1007/s00031-020-09612-1},
}

\bib{BenandSophie}{article}{
   author={Blum-Smith, Ben},
   author={Marques, Sophie},
   title={When are permutation invariants Cohen-Macaulay over all fields?},
   journal={Algebra Number Theory},
   volume={12},
   date={2018},
   number={7},
   pages={1787--1821},
   issn={1937-0652},
   review={\MR{3871511}},
   doi={10.2140/ant.2018.12.1787},
}

\bib{BT}{article}{
author={Borie, Nicolas},
author={Thi\'ery, Nicolas},
title={An evaluation approach to computing invariants rings },
date={November 17, 2011},
eprint={arXiv:1110.3849 [math.CO]},
}

\bib{BMR}{article}{
   author={Brou\'{e}, Michel},
   author={Malle, Gunter},
   author={Rouquier, Rapha\"{e}l},
   title={Complex reflection groups, braid groups, Hecke algebras},
   journal={J. Reine Angew. Math.},
   volume={500},
   date={1998},
   pages={127--190},
   issn={0075-4102},
   review={\MR{1637497}},
}

\bib{magma}{article}{
   author={Bosma, Wieb},
   author={Cannon, John},
   author={Playoust, Catherine},
   title={The Magma algebra system. I. The user language},
   note={Computational algebra and number theory (London, 1993)},
   journal={J. Symbolic Comput.},
   volume={24},
   date={1997},
   number={3-4},
   pages={235--265},
   issn={0747-7171},
   review={\MR{1484478}},
   doi={10.1006/jsco.1996.0125},
}

\bib{C-W}{book}{
   author={Campbell, H. E. A. Eddy},
   author={Wehlau, David L.},
   title={Modular invariant theory},
   series={Encyclopaedia of Mathematical Sciences},
   volume={139},
   note={Invariant Theory and Algebraic Transformation Groups, 8},
   publisher={Springer-Verlag, Berlin},
   date={2011},
   pages={xiv+233},
   isbn={978-3-642-17403-2},
   review={\MR{2759466}},
   doi={10.1007/978-3-642-17404-9},
}

\bib{Cl}{book}{     
    author={Pete L. Clark}, 
    title={Geometry of Numbers with Applications to Number Theory},
    year={2012}, 
    url={https://api.semanticscholar.org/CorpusID:127970343} 
}

\bib{C}{book}{    
    AUTHOR = {Cohen, Henri},
     TITLE = {A course in computational algebraic number theory},
    SERIES = {Graduate Texts in Mathematics},
    VOLUME = {138},
 PUBLISHER = {Springer-Verlag, Berlin},
      YEAR = {1993},
     PAGES = {xii+534},
      ISBN = {3-540-55640-0},
   MRCLASS = {11Y40 (11Rxx 68Q40)},
  MRNUMBER = {1228206},
MRREVIEWER = {Joe\ P.\ Buhler},
       DOI = {10.1007/978-3-662-02945-9},
       URL = {https://doi.org/10.1007/978-3-662-02945-9},
}

\bib{D-K}{book}{
  title		= {Computational invariant theory, second edition},
  publisher	= {Springer-Verlag},
  author	= {Derksen, Harm},
  author  = {Kemper, Gregor},
  series	= {Invariant Theory and Algebraic Transformation Groups, I},
  address	= {Berlin},
  note		= {Encyclopaedia of Mathematical Sciences, 130},
  isbn		= {3-540-43476-3},
  pages		= {x+268},
  year = {2015},
  review = {\MR{3445218}},
}
\bib{GS}{article}{
    AUTHOR = {Garsia, A. M. and Stanton, D.},
     TITLE = {Group actions of {S}tanley-{R}eisner rings and invariants of
              permutation groups},
   JOURNAL = {Adv. in Math.},
  FJOURNAL = {Advances in Mathematics},
    VOLUME = {51},
      YEAR = {1984},
    NUMBER = {2},
     PAGES = {107--201},
      ISSN = {0001-8708},
   MRCLASS = {20B35 (06A10 13H10 15A72)},
  MRNUMBER = {736732},
MRREVIEWER = {Ira\ Gessel},
       DOI = {10.1016/0001-8708(84)90005-7},
       URL = {https://doi.org/10.1016/0001-8708(84)90005-7},
}

\bib{Gobel}{article}{
   author={G\"{o}bel, Manfred},
   title={Computing bases for rings of permutation-invariant polynomials},
   journal={J. Symbolic Comput.},
   volume={19},
   date={1995},
   number={4},
   pages={285--291},
   issn={0747-7171},
   review={\MR{1339909}},
   doi={10.1006/jsco.1995.1017},
}

\bib{Hersch}{article}{
    AUTHOR = {Hersh, Patricia},
     TITLE = {A partitioning and related properties for the quotient complex
              {$\Delta(B_{lm})/S_l\wr S_m$}},
      NOTE = {With an appendix by Vic Reiner},
   JOURNAL = {J. Pure Appl. Algebra},
  FJOURNAL = {Journal of Pure and Applied Algebra},
    VOLUME = {178},
      YEAR = {2003},
    NUMBER = {3},
     PAGES = {255--272},
      ISSN = {0022-4049,1873-1376},
   MRCLASS = {05E25 (06A11 13A50 52B40)},
  MRNUMBER = {1953733},
MRREVIEWER = {Darla\ Kremer},
       DOI = {10.1016/S0022-4049(02)00192-5},
       URL = {https://doi.org/10.1016/S0022-4049(02)00192-5},
}

\bib{H-E}{article}{
   author={Hochster, M.},
   author={Eagon, John A.},
   title={Cohen-Macaulay rings, invariant theory, and the generic perfection
   of determinantal loci},
   journal={Amer. J. Math.},
   volume={93},
   date={1971},
   pages={1020--1058},
   issn={0002-9327},
   review={\MR{302643}},
   doi={10.2307/2373744},
}
\bib{K}{article}{    
    AUTHOR = {Kronecker, L.},
     TITLE = {Grundz\"uge einer arithmetischen {T}heorie der algebraische
              {G}r\"ossen},
   JOURNAL = {J. Reine Angew. Math.},
  FJOURNAL = {Journal f\"ur die Reine und Angewandte Mathematik. [Crelle's
              Journal]},
    VOLUME = {92},
      YEAR = {1882},
     PAGES = {1--122},
      ISSN = {0075-4102,1435-5345},
   MRCLASS = {99-04},
  MRNUMBER = {1579896},
       DOI = {10.1515/crll.1882.92.1},
       URL = {https://doi.org/10.1515/crll.1882.92.1},
}	

\bib{L}{book}{    
    AUTHOR = {Lenstra, Jr., Hendrik W.},
     TITLE = {Lattices},
 BOOKTITLE = {Algorithmic number theory: lattices, number fields, curves and
              cryptography},
    SERIES = {Math. Sci. Res. Inst. Publ.},
    VOLUME = {44},
     PAGES = {127--181},
 PUBLISHER = {Cambridge Univ. Press, Cambridge},
      YEAR = {2008},
      ISBN = {978-0-521-80854-5},
   MRCLASS = {11Y16},
  MRNUMBER = {2467546},
}

\bib{M}{article}{
Author = {James S Milne},
Title = {Descent for Algebraic Schemes},
Year = {2024},
Eprint = {arXiv:2406.05550},
}

\bib{ReinerThesis}{article}{
   author={Reiner, Victor},
   title={Quotients of Coxeter complexes and $P$-partitions},
   journal={Mem. Amer. Math. Soc.},
   volume={95},
   date={1992},
   number={460},
   pages={vi+134},
   issn={0065-9266},
   review={\MR{1101971}},
   doi={10.1090/memo/0460},
}
		
\bib{Schmid}{article}{
  author	= {Schmid, Barbara J.},
  title		= {Finite groups and invariant theory},
  booktitle	= {Topics in invariant theory (Paris, 1989/1990)},
  publisher	= {Springer},
  year		= {1991},
  volume	= {1478},
  series	= {Lecture Notes in Mathematics},
  pages		= {35--66},
  address	= {Berlin},
  mrclass	= {13A50 (15A72 20C15)},
  mrreviewer	= {P. E. Newstead},
  mrnumber	= {94c:13002}
}

\bib{ST}{article}{
   author={Shephard, G. C.},
   author={Todd, J. A.},
   title={Finite unitary reflection groups},
   journal={Canad. J. Math.},
   volume={6},
   date={1954},
   pages={274--304},
   issn={0008-414X},
   review={\MR{59914}},
   doi={10.4153/cjm-1954-028-3},
}

\bib{S}{article}{
   author={Smith, Larry},
   title={Some rings of invariants that are Cohen-Macaulay},
   journal={Canad. Math. Bull.},
   volume={39},
   date={1996},
   number={2},
   pages={238--240},
   issn={0008-4395},
   review={\MR{1390361}},
   doi={10.4153/CMB-1996-030-2},
}

\bib{St77}{article}{
   author={Stanley, Richard P.},
   title={Relative invariants of finite groups generated by
   pseudoreflections},
   journal={J. Algebra},
   volume={49},
   date={1977},
   number={1},
   pages={134--148},
   issn={0021-8693},
   review={\MR{460484}},
   doi={10.1016/0021-8693(77)90273-3},
}
	
\bib{bible}{article}{
   author={Stanley, Richard P.},
   title={Invariants of finite groups and their applications to
   combinatorics},
   journal={Bull. Amer. Math. Soc. (N.S.)},
   volume={1},
   date={1979},
   number={3},
   pages={475--511},
   issn={0273-0979},
   review={\MR{526968}},
   doi={10.1090/S0273-0979-1979-14597-X},
}

\end{biblist}
\end{bibdiv}
\end{document}